\documentclass[12pt]{article}
\usepackage{graphicx}
\usepackage[dvips]{epsfig}
\usepackage{subfigure}
\usepackage[a4paper]{geometry}
\usepackage{tikz-cd}
\usepackage{amsmath,amssymb,amsthm,amsfonts,hyperref,amscd}
\usepackage{times}
\usepackage{setspace}
\usepackage{verbatim}
\usepackage{cancel}
\usepackage{mathtools}
\usepackage[toc,page]{appendix}
\usepackage{multirow}

\newtheorem{thm}{Theorem}[section]
\newtheorem{defn}[thm]{Definition}
\newtheorem{lem}[thm]{Lemma}
\newtheorem{prop}[thm]{Proposition}
\newtheorem{cor}[thm]{Corollary}
\newtheorem{rmk}[thm]{Remark}
\newtheorem{eg}[thm]{Example}
\theoremstyle{plain}
\theoremstyle{definition}
\counterwithin*{equation}{section}

\newcommand{\F}{\mathcal{F}}

\newcommand{\Db}{\Delta_B}

\newcommand{\nD}{\nabla}

\newcommand{\addresses}{\bigskip\footnotesize

J. Moon, \textsc{Department of Mathematics, Chung-Ang University, 84 HeukSeok-ro DongJak-gu, Seoul 06974, Republic of Korea.} \par\nopagebreak
\textit{E-mail address:}\texttt{dsfish999@cau.ac.kr} }

\title{Symmetric tautness tensor of Riemannian foliations}
\author{Jungwoo Moon}
\date{}

\begin{document}

\maketitle

\begin{abstract}
We study tautness properties of a Riemannian foliation by investigating a symmetric $2$-tensor associated with the mean curvature of the foliation. As a consequence, we prove a tautness condition for Riemannian foliations on compact manifolds via the symmetric tautness tensor. Moreover, several applications of the aforementioned tensor are provided under various geometric conditions.
\end{abstract}

\section{Introduction}



Let $(M,\F,g)$ be a foliated Riemannian manifold. If the Riemannian metric $g_Q$ restricted to the normal bundle $Q$ of the given foliation $\F$ satisfies $L_Ug_Q=0$ for any vector field $U$ tangent to $\F$, we call such $\F$ a Riemannian foliation. When there exists a metric $g$ under which every leaf of $\F$ is minimal, we say $\F$ is taut. Otherwise, it is called nontaut.

A distinction between taut and nontaut Riemannian foliations is expressed through the relationship between two notions of divergence. They are the transverse divergence $\text{div}_Q$, which restricts the usual divergence to the normal bundle $Q$, and the basic divergence $\text{div}_B$, defined as the formal adjoint of the Riemannian connection with respect to $g_Q$, respectively. In view of the above, determining the tautness of a given Riemannian foliation is one of the most crucial topics in Riemannian foliations, as these two divergences coincide if and only if the foliation is taut. 

Several characterizations of tautness have been established. Alvarez-L\'opez showed that the cohomology class of the mean curvature $1$-form $\kappa_b$ determines whether $\F$ is taut \cite{AL}, while Kamber-Tondeur and Habib-Richardson established further cohomological conditions equivalent to tautness \cite{HR,Ton}. Central to all of these approaches is the following divergence formula (cf.\cite{AL,Ton}).

\begin{equation}
    \text{div}_Q=\text{div}_B+i_{\tau_b},
\end{equation} where $i$ is the interior product of a tensor field and $\tau_b$ is the mean curvature vector field dual to $\kappa_b$, which is also a basic vector field. That is, $\tau_b$ satisfies $L_U\tau_b=0$ for any vector field $U$ tangent to $\F$.

Considering the transverse Riemannian covariant derivative $\nD_{tr}\kappa_b$ of $\kappa_b$, one obtains the following inner product formula for basic symmetric $2$-tensors.
\begin{equation}
    \int g_Q(\nD_{tr}\kappa_b,v)=-\int g_Q(\kappa_b,\text{div}_Bv)=-\int g_Q(\kappa_b,\text{div}_Qv)+\int v(\tau_b,\tau_b),
\end{equation} where $v$ is an arbitrary basic symmetric $2$-tensor on $Q$.

From the above formula, the $2$-tensor $\text{div}_Q^*(\kappa_b)$ defined via the formal adjoint $\text{div}_Q^*$ of $\text{div}_Q$ is identified with $\kappa_b\otimes\kappa_b-\nD_{tr}\kappa_b$. Hence, we introduce the following basic $2$-tensor.
\begin{equation}
 T_{\kappa}=\nD_{tr}\kappa_b-\kappa_b\otimes\kappa_b.
\end{equation}
Since one may generally assume $d\kappa_b=0$ and $\text{div}_B\kappa_b=0$ \cite{AL}, the tensor $T_{\kappa}$ is symmetric and transverse trace-free. Also, we derive the basic divergence of $T_{\kappa}$ on an arbitrary compact foliated manifold as follows (cf. Proposition 4.1). 
 \begin{equation}
 \text{div}_B(T_{\kappa})=Ric^Q(\kappa_b).
 \end{equation} 
 As the tensor $T_{\kappa}$ is related to transverse Ricci curvature $Ric^Q,$ we may also consider the relation between the tensor $T_{\kappa}$ and the transverse Jacobi operator defined as follows\cite{KT}.
 \begin{equation}
       J^Q(X)=\nD^*\nD X-i_XRic^Q.
 \end{equation}

 Since the transverse Jacobi operator $J^Q$ is derived from the second variation of the energy functional of the orthogonal projection $\pi:TM \xrightarrow{} Q$ of a taut Riemannian foliation \begin{equation}
        E(\F)=\dfrac{1}{2}\int_M g_Q(\pi\wedge *\pi),
    \end{equation} it should be noted that a mean curvature of a taut foliation is an element of $\ker J^Q$ and on a Riemannian foliation with strictly negative transverse Ricci curvature, the converse also holds \cite{KT}. As a consequence, we may wonder whether the previous result could be extended to an arbitrary Riemannian foliation on compact manifolds. 
 
In these spirits, we have the following equivalence result(cf. Theorem 3.3, Theorem 4.2, and Theorem 4.4).

\begin{thm} Let $(M,\F,g)$ be a Riemannian foliation on a compact Riemannian manifold with $\text{div}_B\kappa_b=0$. Then the following are equivalent.
\begin{enumerate}
\item $\F$ is taut. 
\item $T_{\kappa}=0$. 
\item $Ric^Q(\tau_b,\tau_b)\geq 0$.
\item $\kappa_b \in \ker (J^Q)$.
\end{enumerate}
\end{thm}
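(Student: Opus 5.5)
The plan is to use tautness in the form of Alvarez-L\'opez: under $\text{div}_B\kappa_b=0$, with $\kappa_b$ closed and basic-harmonic, $\F$ is taut if and only if $\kappa_b=0$. So in each implication (2),(3),(4)$\Rightarrow$(1) the goal is to show $\kappa_b\equiv0$. The converses are immediate: if $\kappa_b=0$, then $T_\kappa=0$, $Ric^Q(\tau_b,\tau_b)=0\ge0$, and $J^Q(0)=0$.

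The central identity comes from the inner product formula in the introduction with $v=T_\kappa$. Rather than that, I will take the divergence formula $\text{div}_B(T_\kappa)=Ric^Q(\kappa_b)$ (Proposition 4.1) and pair it with $\kappa_b$. Integrating by parts with the basic adjoint, and using symmetry of $T_\kappa$, gives
\begin{equation*}
\int_M Ric^Q(\tau_b,\tau_b)=\int_M g_Q(\text{div}_BT_\kappa,\kappa_b)=-\int_M g_Q(T_\kappa,\nD_{tr}\kappa_b).
\end{equation*}
Since $T_\kappa=\nD_{tr}\kappa_b-\kappa_b\otimes\kappa_b$, the right side equals $-\int|\nD_{tr}\kappa_b|^2+\int\nD_{tr}\kappa_b(\tau_b,\tau_b)$.

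The last term is $\frac12\int\tau_b(|\kappa_b|^2)$. Here the $\text{div}_Q$ versus $\text{div}_B$ distinction matters: it integrates to $\frac12\int|\kappa_b|^2\,\text{div}_B\tau_b$ plus terms of the form $\int|\kappa_b|^4$. I will use the divergence formula $\text{div}_Q=\text{div}_B+i_{\tau_b}$ together with the fact that on $M$ the ordinary integration by parts is against $\text{div}_Q$. With $\text{div}_B\tau_b=0$, the outcome should be an identity of the shape
\begin{equation*}
\int_M Ric^Q(\tau_b,\tau_b)=-\int_M|\nD_{tr}\kappa_b|^2-c\int_M|\kappa_b|^4,\qquad c>0.
\end{equation*}
Equivalently, one can write it as $-\int|T_\kappa|^2$ minus a positive multiple of $\int|\kappa_b|^4$. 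Pinning down the exact constant and sign of the quartic term is the step I expect to be the main obstacle. It requires carefully tracking which divergence is adjoint to $\nD_{tr}$ under $\int_M$, namely the second equality of the inner product formula.

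Once this identity is established, the implications follow.
\begin{itemize}
\item (3)$\Rightarrow$(1): if $Ric^Q(\tau_b,\tau_b)\ge0$, then both negative terms vanish, so $\kappa_b=0$.
\item (2)$\Rightarrow$(1): if $T_\kappa=0$, then $\text{div}_BT_\kappa=0$, so $Ric^Q(\kappa_b)=0$ and the left side vanishes; hence $\kappa_b=0$.
\item (4)$\Rightarrow$(1): $J^Q\kappa_b=0$ gives $\int|\nD\kappa_b|^2=\int Ric^Q(\tau_b,\tau_b)$, with the adjoint $\nD^*$ taken in the sense of Kamber--Tondeur. Substituting into the identity yields $2\int|\nD_{tr}\kappa_b|^2+c\int|\kappa_b|^4=0$, so $\kappa_b=0$.
\end{itemize}
The care needed in (4)$\Rightarrow$(1) is to check that the $\nD^*$ in $J^Q$ agrees with the basic adjoint, or else to absorb the extra $i_{\tau_b}$ term into the quartic term as above.
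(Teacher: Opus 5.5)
\textbf{The key identity you propose is false, and all three of your implications use exactly its false part.} Your overall scheme is sound: one integral identity for $\int_M Ric^Q(\tau_b,\tau_b)$, from which the implications are read off. This is also the paper's route to (3) in Theorem 4.2, which combines the computation in Corollary 3.4 with Proposition 4.1.

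The error is the claim that integration by parts on $M$ is against $\text{div}_Q$. For a section $X$ of $Q$, the Riemannian divergence on $M$ is $\text{div}_QX-\kappa(X)$. Against basic data this is $\text{div}_BX$, which is exactly why $-\text{div}_B$, not $-\text{div}_Q$, is the $L^2(M)$-adjoint of $\nabla_{tr}$. Hence, as in the proof of Theorem 3.3,
\[
\int_M\nabla_{tr}\kappa_b(\tau_b,\tau_b)=\frac12\int_M g_Q(d|\kappa_b|^2,\kappa_b)=-\frac12\int_M|\kappa_b|^2\,\text{div}_B\kappa_b=0 .
\]
So the correct identity is $\int_M Ric^Q(\tau_b,\tau_b)=-\int_M|\nabla_{tr}\kappa_b|^2$, i.e.\ $c=0$.

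The Carri\`ere torus confirms this. With $a=\log\rho$ one finds $\nabla_{tr}\kappa_b=a^2\,dx_2\otimes dx_2$ and $Ric^Q=-a^2g_Q$, so both sides equal $-a^4\,\text{vol}(M)$. Meanwhile $\int_M|\kappa_b|^4=a^4\,\text{vol}(M)\neq0$, so no quartic term can be present.

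Your ``equivalent'' form also has the wrong sign. Since the cross term vanishes, $\int|T_\kappa|^2=\int|\nabla_{tr}\kappa_b|^2+\int|\kappa_b|^4$. The right side is therefore $-\int|T_\kappa|^2$ \emph{plus} $\int|\kappa_b|^4$.

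\textbf{The missing step.} With $c=0$, each of your arguments for (2), (3), (4)$\Rightarrow$(1) yields only $\nabla_{tr}\kappa_b=0$, and the passage to $\kappa_b=0$ is absent. It is supplied by Lemma 3.2: taking the transverse trace,
\[
0=\text{tr}_Q\nabla_{tr}\kappa_b=\text{div}_Q\kappa_b=\text{div}_B\kappa_b+|\kappa_b|^2=|\kappa_b|^2 .
\]
This is where the $\text{div}_Q$ versus $\text{div}_B$ discrepancy actually contributes: pointwise, through the trace, not as an integrated quartic term.

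With this lemma inserted, your outline works.
\begin{itemize}
\item (3) is immediate.
\item For (2), $\nabla_{tr}\kappa_b=0$ together with $T_\kappa=0$ gives $\kappa_b\otimes\kappa_b=0$ directly.
\item For (4), pairing $J^Q\kappa_b=0$ with $\kappa_b$ gives $\int|\nabla_{tr}\kappa_b|^2=\int Ric^Q(\tau_b,\tau_b)=-\int|\nabla_{tr}\kappa_b|^2$.
\end{itemize}
Your concern about which adjoint appears in $J^Q$ is harmless. Replacing $-\text{div}_B$ by $-\text{div}_Q$ changes this pairing only by $\int\nabla_{tr}\kappa_b(\tau_b,\tau_b)=0$.

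Your route to (4) is somewhat more direct than the paper's, which shows $\text{div}_BT_\kappa=-\frac14d|\kappa_b|^2$ is exact and then invokes Corollary 3.4. For (2), the paper instead uses $\int|T_\kappa|^2=\int|\nabla_{tr}\kappa_b|^2+\int|\kappa_b|^4$. There a quartic term genuinely appears, but it comes from $\kappa_b\otimes\kappa_b$ itself, not from integration by parts.
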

In view of Theorem 1.1, we would call the tensor $T_{\kappa}$ \textit{symmetric tautness tensor} hereafter. Note also that the construction in (1.3) is specific, if we replace $\kappa_b$ with $\displaystyle{s=c\kappa_b}$ for a constant $c$(cf. Remark 3.5).

As a direct corollary of Theorem 1.1, we recover and unify the work of Habib and Richardson(cf. \cite{HR}) and the work of S. Hwang, S. D. Jung and the author(cf. \cite{HJM}) as follows(cf. Remark 4.3).
\begin{cor}
 Let $(M,\F,g)$ be a compact manifold with a Riemannian foliation whose transverse metric satisfies $\displaystyle{Ric^Q\geq 0}$. Then $\F$ must be taut. 
 \end{cor}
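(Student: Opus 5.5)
The plan is to deduce Corollary 1.2 from the equivalence (1)$\Leftrightarrow$(3) of Theorem 1.1. The only work is to arrange the hypothesis $\text{div}_B\kappa_b=0$ and to check that changing the metric does not spoil $Ric^Q\geq 0$.

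The first step uses Alvarez-L\'opez's result \cite{AL}. It gives a bundle-like metric $\tilde g$ on $M$ with the same transverse metric $g_Q$ on $Q$ whose basic mean curvature form $\tilde\kappa_b$ satisfies $d\tilde\kappa_b=0$ and $\text{div}_B\tilde\kappa_b=0$. Only the metric along the leaves is modified, so the transverse Levi-Civita connection is unchanged, and hence so is $Ric^Q$. In particular the hypothesis $Ric^Q\geq 0$ still holds for $(M,\F,\tilde g)$. Tautness, by definition, does not depend on the chosen metric. So we may assume from the outset that $\text{div}_B\kappa_b=0$, and Theorem 1.1 applies.

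With this normalization in place, $Ric^Q\geq 0$ means $Ric^Q(X,X)\geq 0$ for every $X\in\Gamma(Q)$. Taking $X=\tau_b$ gives $Ric^Q(\tau_b,\tau_b)\geq 0$ pointwise, which is condition (3) of Theorem 1.1. The implication (3)$\Rightarrow$(1) then shows that $\F$ is taut.

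This corollary has no real obstacle of its own. All of the difficulty sits in (3)$\Rightarrow$(1) of Theorem 1.1. That step presumably combines $\text{div}_B T_\kappa=Ric^Q(\kappa_b)$ with the integral formula (1.2) applied to $v=T_\kappa$; this should give $\int |T_\kappa|^2$ plus a term of the form $\int |\kappa_b|^4$ or $\int Ric^Q(\tau_b,\tau_b)$ equal to zero, forcing $\kappa_b=0$. For the corollary itself, the only point to verify is that the metric change preserving $g_Q$ leaves $Ric^Q$ unchanged, and this holds because $Ric^Q$ depends only on $g_Q$ through the transverse connection.
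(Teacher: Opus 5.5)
Your proposal is correct and follows the paper's route: the paper obtains the corollary directly from Theorem 1.1 (via Theorem 4.2). It works under the standing Alvarez-L\'opez normalization $\text{div}_B\kappa_b=0$, which changes only the leafwise metric and so leaves $g_Q$ and $Ric^Q$ unchanged, just as you argue. For the record, the paper's proof of (3)$\Rightarrow$(1) pairs $\text{div}_B T_\kappa=Ric^Q(\kappa_b)$ with $\kappa_b$ rather than with $T_\kappa$, obtaining $\int_M Ric^Q(\tau_b,\tau_b)=-\int_M|\nD_{tr}\kappa_b|^2$, and then concludes with Lemma 3.2.
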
 

On the other hand, $\ker J^Q$ of a nontaut Riemannian foliation cannot attain $\kappa_b$, specifically on a nontaut transverse Einstein foliation(i.e. $\displaystyle{Ric^Q=\lambda^Q\text{ }g_Q}$). However, other behaviors of a nontaut transverse Einstein foliation $\F$ on compact manifolds remain open questions, except few known tautness properties and examples of such foliations (cf. \cite{Car,HJM,Lin}). To deal with the above, we introduce the normalized total transverse scalar curvature, an analogy of the total scalar curvature \begin{equation}
    \mathcal{S}=\int_M S
\end{equation} on a compact foliated manifold $M$ in a transverse manner as follows.\begin{equation}
\bar{\mathcal{S}}^Q={\text{vol}(M)^{\frac{2-q}{q}}\int_M S^Q},
\end{equation} where $q$ is the codimension of $\F$. One should note that the normalizing factor is adapted to $\text{vol}(M)^{\frac{2-q}{q}}$ since we would like to observe the evolution equation of $g_Q$, motivated by Lin's work on transverse Perelman functional \cite{Lin}. Mimicking the calculation of the first variation on total scalar curvature, the critical metrics of $\bar{\mathcal{S}}^Q$ is characterized as follows(cf. Corollary 4.8 and Corollary 4.9).

\begin{thm}
    Let $(M,\F,g)$ be a compact foliated Riemannian manifold with a Riemannian foliation. Then the critical transverse metric of the normalized total transverse scalar curvature is derived as follows.
    \begin{equation}
       Ric^Q+T_{\kappa}=\lambda^Q\text{ }g_Q,
    \end{equation} for some constant $\lambda^Q$. In particular, the above critical metric $g_Q$ should be transverse Einstein and $\F$ is taut. 
\end{thm}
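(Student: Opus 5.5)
We compute the first variation of $\bar{\mathcal{S}}^Q$ along a family of bundle-like metrics $g(t)$ that fixes the leafwise metric and varies only the transverse part, $\dot g_Q=h$ with $h$ a basic symmetric $2$-tensor on $Q$. Three variations are needed: of $S^Q$, of the volume form, and of $\text{vol}(M)$.

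\textbf{Variation of $S^Q$.} The transverse Levi-Civita connection satisfies the usual Koszul-type formula. Hence the classical computation carries over transversally:
\[
\dot S^Q=-g_Q(Ric^Q,h)+\text{div}_Q\text{div}_Q h-\Delta_Q(\text{tr}_Q h).
\]
The essential point is that the second-order terms are transverse divergences $\text{div}_Q$, not basic ones. Here $\Delta_Q$ denotes the transverse Laplacian, i.e.\ $\text{div}_Q$ applied to the gradient, in keeping with the convention of the displayed formula.

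\textbf{Variation of the volume.} Since only $g_Q$ varies and $g=g_L+g_Q$, we have $\dot{dv}=\tfrac12(\text{tr}_Q h)\,dv$.

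\textbf{Converting divergences to integrals.} By (1.1), $\int\text{div}_Q Y=\int g_Q(\kappa_b,Y)$ for basic $Y$. Write $\tau_b$ for the dual vector field. Then:
\begin{itemize}
\item $\int\text{div}_Q(\text{div}_Q h)=\int\kappa_b(\text{div}_Q h)$. By (1.2), this equals $\int g_Q(\kappa_b\otimes\kappa_b-\nabla_{tr}\kappa_b,h)=-\int g_Q(T_\kappa,h)$.
\item $\int\Delta_Q(\text{tr}_Q h)=\int\kappa_b(\nabla\text{tr}_Q h)=\int(\text{div}_B\kappa_b)\,\text{tr}_Q h=0$, using $\text{div}_B\kappa_b=0$.
\end{itemize}

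\textbf{Combining.} Putting these together gives
\[
\frac{d}{dt}\int S^Q\,dv=\int g_Q\Big(-Ric^Q-T_\kappa+\tfrac12 S^Q g_Q,\;h\Big).
\]
The normalization factor contributes $\frac{2-q}{q}\,\text{vol}^{\frac{2-q}{q}-1}\int S^Q\cdot\tfrac12\int\text{tr}_Q h$. Criticality for all basic $h$ then forces
\[
Ric^Q+T_\kappa-\tfrac12 S^Qg_Q=-\tfrac{2-q}{2q}\,\overline{S^Q}\,g_Q,
\]
where $\overline{S^Q}$ is the average of $S^Q$. The left side is a basic tensor, so the equation holds pointwise.

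\textbf{Constancy of $\lambda^Q$.} Taking the $g_Q$-trace and using that $T_\kappa$ is transverse trace-free gives $(1-\tfrac q2)S^Q=-\tfrac{2-q}{2}\overline{S^Q}$. For $q\neq2$ this shows $S^Q$ is constant, and after rechecking signs one obtains $Ric^Q+T_\kappa=\lambda^Q g_Q$ with $\lambda^Q=S^Q/q$ constant. The case $q=2$ is degenerate, with $\bar{\mathcal S}^Q=\int S^Q$. There, instead, take $\text{div}_B$ of the equation: the transverse Bianchi identity together with Proposition 4.1, $\text{div}_B T_\kappa=Ric^Q(\kappa_b)$, gives a relation from which we would derive $dS^Q$ in terms of $Ric^Q(\kappa_b)$.

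\textbf{Tautness and the Einstein condition.} Pair the critical equation with $\kappa_b\otimes\kappa_b$ and integrate. Using (1.2) with $v=\kappa_b\otimes\kappa_b$, we would show $\int g_Q(T_\kappa,\kappa_b\otimes\kappa_b)$ reduces to a multiple of $-\int|\kappa_b|^4$ up to divergence terms. We then get $\int Ric^Q(\tau_b,\tau_b)=\lambda^Q\int|\kappa_b|^2-\int g_Q(T_\kappa,\kappa_b\otimes\kappa_b)$. An alternative is to apply $\text{div}_B$ to the critical equation: constancy of $\lambda^Q$, Proposition 4.1 and the contracted Bianchi identity give $Ric^Q(\kappa_b)=-\text{div}_B Ric^Q=-\tfrac12 dS^Q=0$ when $S^Q$ is constant. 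Hence $Ric^Q(\tau_b,\tau_b)=0\ge0$, and Theorem 1.1 (the implication (3)$\Rightarrow$(1)) yields tautness. Then $\kappa_b=0$ for a suitable metric in the class, so $T_\kappa=0$ and $Ric^Q=\lambda^Qg_Q$ is transverse Einstein.

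\textbf{Main obstacle.} I expect the hardest step to be the bookkeeping in the variation: justifying that the second-order terms are genuinely $\text{div}_Q$ so that (1.1) and (1.2) apply, getting the signs right, and handling $q=2$ via the Bianchi route.
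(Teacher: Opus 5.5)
\textbf{There is a genuine gap in the tautness step.} Your variational computation matches the paper's (Lemma 2.2, $\int\text{div}_Q^2h=-\int g_Q(T_\kappa,h)$ via (1.2), the $\text{div}_Q d\,\text{tr}_Qh$ term vanishing by $\text{div}_B\kappa_b=0$, then the trace argument), apart from a sign slip. The Euler--Lagrange equation is $Ric^Q+T_\kappa-\tfrac12S^Qg_Q=+\tfrac{2-q}{2q}\overline{S^Q}g_Q$, whose trace gives $S^Q=\overline{S^Q}$ for $q\neq2$.

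Your divergence argument twice treats $\text{div}_B$ as if it were $\text{div}_Q$. Since $\text{div}_B=\text{div}_Q-i_{\tau_b}$ and $\nabla g_Q=0$, one has $\text{div}_B(\lambda^Qg_Q)=-\lambda^Q\kappa_b$, not $0$. Likewise, the contracted Bianchi identity is $\text{div}_QRic^Q=\tfrac12dS^Q$, so $\text{div}_BRic^Q=\tfrac12dS^Q-Ric^Q(\kappa_b)$. With the correct formulas, the $Ric^Q(\kappa_b)$ term here cancels the one from Proposition 4.1. Taking the divergence of the critical equation then yields only $\tfrac12dS^Q=-\lambda^Q\kappa_b$, i.e.\ $\lambda^Q\kappa_b=0$ once $S^Q$ is constant. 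Nothing about $Ric^Q(\tau_b,\tau_b)$ follows, so the appeal to (3)$\Rightarrow$(1) is unsupported.

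Your first route is unfinished and cannot close on its own. Combined with $\int Ric^Q(\tau_b,\tau_b)=-\int|\nabla_{tr}\kappa_b|^2$ (from (3.9) and Proposition 4.1), it gives $-\int|\nabla_{tr}\kappa_b|^2=\lambda^Q\int|\kappa_b|^2+\int|\kappa_b|^4$. This says nothing when $\lambda^Q<0$, which is exactly the regime of nontaut transverse Einstein foliations.

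\textbf{The missing idea is to pair the critical equation with $T_\kappa$ itself.} Take $v=Ric^Q$ in (1.2) and use $\text{div}_QRic^Q=\tfrac12dS^Q$. This gives
$\int g_Q(Ric^Q,T_\kappa)=-\tfrac12\int g_Q(dS^Q,\kappa_b)=\tfrac12\int S^Q\,\text{div}_B\kappa_b=0$, which is Lemma 4.10. Because $T_\kappa$ is trace-free, $\int|T_\kappa|^2=\int g_Q(T_\kappa,\lambda^Qg_Q-Ric^Q)=0$. Hence $T_\kappa=0$, so $\kappa_b=0$ by Theorem 3.3 and $Ric^Q=\lambda^Qg_Q$; this is the paper's proof of Corollary 4.9.

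Your own ingredients can also be salvaged together. The corrected identity $\lambda^Q\kappa_b=0$ settles the case $\lambda^Q\neq0$, and the integral identity above settles $\lambda^Q=0$.

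\textbf{Your $q=2$ Bianchi plan cannot work.} In transverse dimension two, $Ric^Q=\tfrac12S^Qg_Q$ identically, so criticality reduces to $T_\kappa=0$. That gives tautness but no constancy of $S^Q$. For the circle foliation of $\Sigma\times S^1$ with product metric, Gauss--Bonnet makes every transverse metric critical, including ones of nonconstant curvature. The paper's trace argument also tacitly assumes $q\neq2$.
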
 Comparing with the critical point of $\mathcal{S}$, the critical metric of $\bar{\mathcal{S}}^Q$ is subtly different due to the term $T_{\kappa}$ in (1.9). As a direct consequence, a nontaut transverse Einstein foliation cannot be a critical metric of the normalized total transverse scalar curvature. 

The content of this paper is structured as follows. In Section 2, we recall the preliminaries of Riemannian foliations. In Section 3, some properties of the symmetric tautness tensor are justified. In Section 4, Theorem 1.1, Corollary 1.2 and Theorem 1.3 are proved. In the last section, we attain some examples of Riemannian foliation with nonvanishing symmetric tautness tensor.

The author is grateful to Professor Seungsu Hwang, and Professor Seoung Dal Jung for suggesting helpful discussions in this work.

\section{Preliminaries}

Let $(M,\F,g)$ be a complete Riemannian manifold with a $q$-codimensional foliation $\F$ with the tangent distribution $T\F$ and let the vector bundle $Q$ defined by the vector bundle linear map $\pi: TM \xrightarrow{} Q$ with $\ker\pi=Q$. In particular, we denote $U$ be a vector field on $T\F$ and $X,Y,Z$ be vector fields on $Q$ whose Lie derivatives with respect to any $U$ vanish.

Since the given foliated manifold is a Riemannian manifold, we have the induced metric $g_Q$ on $Q$ defined by $g_Q=\sigma^*g$, where $\sigma:Q \xrightarrow{} TM$ is the vector bundle embedding. Therefore, we identify $Q$ and $\sigma(Q)$ throughout this paper. Also, note that the metric $g$ is bundle-like. That is, $g$ is decomposed by $g_{T\F}+g_Q$, where $g_{T\F}$ is the remaining part of the metric $g$ on $T\F$.

Thus, a \textit{Riemannian foliaiton} $\F$ on a Riemannian manifold $(M,g)$ is a foliation satisfying $L_Ug_Q=0$ for any $U$. That is, a Lie derivative with respect to $U$ on $g_Q$ always vanishes. Also, we define the \textit{transverse Riemannian connection} $\nD$ on $(Q,g_Q)$ of $(M,g)$ with a Riemannian foliation $\F$ as follows \cite{Ton}.

\begin{equation}
\begin{cases}
\nD_UY =\pi[U,Y] \\
\nD_XY =\pi(D_XY),
\end{cases}
\end{equation} where $D$ is the ordinary Riemannian connection of $(M,g)$.

In light of the definition of curvature tensor, we define the transverse Riemann curvature tensor by the following.
\begin{equation}
R^Q(X,Y)Z=\nD_Y\nD_XZ-\nD_X\nD_YZ+\nD_{[X,Y]}Z.
\end{equation} In particular, $R^Q$ is the $4$-tensor defined as follows.
\begin{equation}
R^Q(X,Y,Z,Z')=g_Q(R^Q(X,Y)Z,Z').
\end{equation}
Clearly, $R^Q$ is a basic tensor as the transverse metric $g_Q$ is Riemannian. In other words, both the interior product $i_UR^Q$ and the Lie derivative $L_UR^Q$ vanishes for every $U$ \cite{Ton}. Likewise, we define the basic symmetric $2$-tensor called \textit{transverse Ricci curvature} $Ric^Q$ (respectively, the basic function called \textit{transverse scalar curvature} $S^Q$) by taking the transverse trace $\text{tr}_Q$ on $R^Q$(respectively, basic symmetric $2$-tensor $Ric^Q$).

Now let us consider the transverse divergence $\displaystyle{\text{div}_Q=\sum_{i=1}^qi_{\mathbf{e}_i}\nD_{\mathbf{e}_i}}$ on $Q$, where $\{\mathbf{e}_i\}$ is a transverse moving frame on $Q$ and $i_{\mathbf{e}_i}$ is the interior product of tensors with respect to $\mathbf{e}_i$. It is well-known that $\text{div}_Q$ may not be the formal adjoint operator of the transverse Riemannian connection $\nD$ \cite{Ton}. Instead, we use the formal adjoint operator $-\text{div}_B$ of $\nD$, called the \textit{basic divergence}, defined as follows.
\begin{equation}
-\text{div}_B=-\text{div}_Q+i_{\tau_b},
\end{equation} where $\tau_b$ is the mean curvature vector field of $\F$ whose corresponding $1$-form $\kappa_b$ is a basic, closed $1$-form \cite{AL,Ton}. Therefore, $\nD_{tr}\kappa_b$ is a basic symmetric $2$-tensor. Note that a Riemannian foliation is \textit{taut}(or, minimalizable) if its basic mean curvature $\kappa_b$ is exact \cite{Ton}. Otherwise, we say a foliation is \textit{nontaut}. Therefore, a foliation on $(M,\F,g)$ is taut if and only if $\displaystyle{\text{div}_B=\text{div}_Q}$ without loss of generality \cite{AL, Ton}. By the work of Alvarez-Lopez again, we may always adjust that the basic mean curvature $1$-form $\kappa_b$ is basic coclosed(i.e. $\text{div}_B\kappa_b=0$) under the fixed transverse metric\cite{AL}. For this reason, we always assume $\text{div}_B\kappa_b=0$ throughout this paper.

For later use, we review the following facts on a compact foliated manifold. 
\begin{thm} \cite{HJM}
Let $(M,\F,g)$ be a compact manifold with a Riemannian foliation. If the transverse Ricci curvature $Ric^Q$ on $(M,\F,g)$ vanishes, then $\F$ is necessarily taut.
\end{thm}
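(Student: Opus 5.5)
The plan is a Bochner-type argument applied to the basic mean curvature $\kappa_b$, which we may take basic harmonic: it is closed, and $\text{div}_B\kappa_b=0$ by the standing normalization of Alvarez-L\'opez. Following the introduction, the target identity is
\begin{equation*}
\int_M Ric^Q(\tau_b,\tau_b) = -\int_M |\kappa_b|^2\,\text{div}_Q\kappa_b \;(\text{up to sign conventions}) = -\int_M |\kappa_b|^4 ,
\end{equation*}
whose left side vanishes when $Ric^Q=0$.

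\textbf{Step 1: divergence of $T_{\kappa}$.} Using $d\kappa_b=0$, the tensor $\nD_{tr}\kappa_b$ is symmetric. The transverse Ricci identity then gives
\begin{equation*}
\text{div}_Q(\nD_{tr}\kappa_b)=\nD\,\text{div}_Q\kappa_b+Ric^Q(\tau_b)
\end{equation*}
pointwise, where $Ric^Q(\tau_b)$ denotes $Ric^Q$ contracted with $\tau_b$. Next use $\text{div}_Q\kappa_b=\text{div}_B\kappa_b+|\kappa_b|^2=|\kappa_b|^2$ from (2.4). This is the local computation behind Proposition 4.1, $\text{div}_B T_{\kappa}=Ric^Q(\kappa_b)$.

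\textbf{Step 2: pair with $\kappa_b$ and integrate.} Take $v=\nD_{tr}\kappa_b$ in formula (1.2), or equivalently pair the identity of Step 1 with $\kappa_b$ and integrate using the adjointness of $-\text{div}_B$ and $\nD$. One obtains
\begin{equation*}
\int_M |\nD_{tr}\kappa_b|^2 = \int_M g_Q\big(\nD_{tr}\kappa_b,\kappa_b\otimes\kappa_b\big) - \int_M Ric^Q(\tau_b,\tau_b).
\end{equation*}
The middle term equals $\tfrac12\int_M \tau_b(|\kappa_b|^2)$. Rewriting this with $\text{div}_B$ and using $\text{div}_B\kappa_b=0$, it equals $\tfrac12\int_M \text{div}_B(|\kappa_b|^2\kappa_b)$. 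This vanishes, since $\text{div}_B$ is the adjoint of $\nD$ applied to the constant function $1$. Hence
\begin{equation*}
\int_M |\nD_{tr}\kappa_b|^2=-\int_M Ric^Q(\tau_b,\tau_b).
\end{equation*}

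\textbf{Step 3: conclusion.} If $Ric^Q=0$, then $\nD_{tr}\kappa_b=0$, so $\kappa_b$ is transversally parallel and $|\kappa_b|$ is constant. Then $\text{div}_Q\kappa_b=0$, and (2.4) gives $0=\text{div}_B\kappa_b=\text{div}_Q\kappa_b-|\kappa_b|^2=-|\kappa_b|^2$. So $\kappa_b=0$ and the foliation is taut. Note that Step 2 shows the stronger statement: $\int_M Ric^Q(\tau_b,\tau_b)\ge 0$ already forces tautness, which is consistent with Theorem 1.1(3).

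\textbf{Main obstacle.} The hard part is the integration by parts on a foliated manifold. Every integral must be handled with $\text{div}_B$, not $\text{div}_Q$: integrals of $\text{div}_B$ of basic objects vanish, while integrals of $\text{div}_Q$ pick up $\tau_b$ terms. Signs from the convention (2.2) for $R^Q$ must also be tracked carefully. I would verify the Ricci commutation in a transverse normal frame at a point, with basic vector fields, where $\nD_{\mathbf{e}_i}\mathbf{e}_j=0$.
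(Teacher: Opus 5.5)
Your argument is correct and is essentially the paper's own route. The paper only cites \cite{HJM} for this statement, but its Theorem 4.2 contains it as the case $Ric^Q=0$: it uses the same identity $\int_M|\nD_{tr}\kappa_b|^2=-\int_M Ric^Q(\tau_b,\tau_b)$, derived from Proposition 4.1 (via the Weitzenb\"ock formula (2.7) rather than your direct Ricci commutation) and (3.9), and then concludes with Lemma 3.2. One harmless slip: your intermediate identity in Step 2 drops the term $-\int_M g_Q(\kappa_b,d\,\text{div}_Q\kappa_b)=-2\int_M g_Q(\nD_{tr}\kappa_b,\kappa_b\otimes\kappa_b)$, so the middle term should carry a minus sign, but since you show that integral vanishes the conclusion is unaffected.
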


\begin{lem}\cite{HJM,Lin} Let $(M,\F,g)$ be a compact manifold with a Riemannian foliation satisfying $\dfrac{d}{dt}g_{\F}=0$. Then the evolution equation of $S^Q$ is derived as follows.
\begin{equation}
    \dfrac{d}{dt}S^Q= \text{div}_Q^2\left(\dfrac{d}{dt}g_Q\right)-\text{div}_Qd\left(tr_Q\left(\dfrac{d}{dt}g_Q\right)\right)-g_Q\left(\dfrac{d}{dt}g_Q,Ric^Q\right).
\end{equation}
\end{lem}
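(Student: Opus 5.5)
We need to prove Lemma 2.2, the evolution of $S^Q$ under a variation of $g_Q$ with $g_{\F}$ fixed; it is cited from [HJM, Lin].

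The plan is to mimic the classical first-variation computation of scalar curvature, carried out for the transverse Levi-Civita connection $\nabla$ on $Q$. Write $h=\frac{d}{dt}g_Q$. This is a basic symmetric 2-tensor, since the variation preserves the Riemannian foliation condition $L_Ug_Q=0$; we assume this is part of the setup. Because $g_{\F}$ is fixed, only the transverse part of the metric varies, and $\nabla$ is determined by $g_Q$ through the transverse Koszul formula. Note that $\nabla_UY=\pi[U,Y]$ does not depend on the metric at all, so along the leaves nothing varies.

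Step one is to compute the variation of the connection. The difference $\frac{d}{dt}\nabla$ is a tensor $C$ on $Q$, symmetric in its lower indices. Differentiating the Koszul formula in a transverse frame $\{\mathbf{e}_i\}$ gives
\[
g_Q(C(X,Y),Z)=\tfrac12\big((\nabla_Xh)(Y,Z)+(\nabla_Yh)(X,Z)-(\nabla_Zh)(X,Y)\big).
\]
Moreover $C(U,\cdot)=0$ for $U$ tangent to $\F$. The point to check here is that the transverse connection is torsion-free and metric on $Q$ for basic sections, so the usual derivation applies verbatim to basic vector fields.

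Step two is the Palatini-type identity
\[
\frac{d}{dt}Ric^Q(Y,Z)=\sum_i\Big(g_Q\big((\nabla_{\mathbf{e}_i}C)(Y,Z),\mathbf{e}_i\big)-g_Q\big((\nabla_{Y}C)(\mathbf{e}_i,Z),\mathbf{e}_i\big)\Big).
\]
This follows from the definition of $R^Q$ by differentiating in $t$, where all traces are transverse traces $\text{tr}_Q$. Then $S^Q=\text{tr}_{g_Q}Ric^Q$, so
\[
\frac{d}{dt}S^Q=-g_Q(h,Ric^Q)+\text{tr}_Q\tfrac{d}{dt}Ric^Q.
\]
The first term comes from varying the inverse metric. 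Substituting $C$ and contracting, the second term becomes
\[
\text{tr}_Q\tfrac{d}{dt}Ric^Q=\sum_{i,j}(\nabla^2_{\mathbf{e}_i,\mathbf{e}_j}h)(\mathbf{e}_i,\mathbf{e}_j)-\sum_i\nabla^2_{\mathbf{e}_i,\mathbf{e}_i}(\text{tr}_Qh),
\]
which is $\text{div}_Q^2h-\text{div}_Q d(\text{tr}_Qh)$. This is exactly the claimed formula.

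The main obstacle is confirming that every divergence arising in the contraction is the transverse divergence $\text{div}_Q=\sum i_{\mathbf{e}_i}\nabla_{\mathbf{e}_i}$ and not $\text{div}_B$. This holds because the computation is pointwise and algebraic in $\nabla$ over a transverse frame. No integration by parts is performed, so the mean curvature term $i_{\tau_b}$ never appears. I would also check that the commutation of $\frac{d}{dt}$ with $\nabla$ on basic tensors is legitimate, using that $h$ is basic and that $\frac{d}{dt}g_{\F}=0$ keeps $\pi$ and $Q$ fixed. Compactness is not needed for this pointwise identity.
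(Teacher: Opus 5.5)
Your argument is correct. The paper gives no proof of its own for this lemma; it is quoted from \cite{HJM,Lin}, and your transverse Palatini computation is the standard derivation behind that citation.

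The two points you isolate are exactly what let the classical computation carry over. First, $C(U,\cdot)=0$ is what disposes of the leafwise component of $[X,Y]$ when you differentiate $R^Q$, so that transverse torsion-freeness closes up the Palatini identity. Second, the identity is pointwise with no integration by parts, so only $\text{div}_Q$, and never $\text{div}_B$, appears.
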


We would like to finish this section to recall the transverse analogies of the Weitzenb\"ock formula below for later use.

\begin{thm}\cite{Jun}
On a compact foliated manifold with a Riemannian foliation $(M,\F,g)$, the following equation holds for any basic $1$-form $\eta$.
\begin{equation}
\Db\eta=Ric^Q(\eta)+\nD_{tr}^*\nD_{tr}\eta+A_{\tau_b}\eta,
\end{equation} where $\nD_{tr}^*=-\text{div}_B$ is the formal adjoint of the transverse Riemannian connection, $A_{\tau_b}$ is a differential operator given by $A_{\tau_b}=L_{\tau_b}-\nD_{\tau_b},$ and $\Db$ is the basic Laplacian given by $\displaystyle{\Db=-(\text{div}_Bd+d\text{div}_B)}$.
Moreover, the following is induced from the above equation.
\begin{equation}
-\dfrac{1}{2}\Db|\eta|^2=Ric^Q(\eta^{\sharp},\eta^{\sharp})+|\nD_{tr}\eta|^2+g_Q((A_{\tau_b}\eta)^{\sharp},\eta^{\sharp})-g_Q(\eta^{\sharp},(\Db\eta)^{\sharp}).
\end{equation}
\end{thm}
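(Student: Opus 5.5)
We prove the first identity by a pointwise frame computation, as for the classical Weitzenb\"ock formula, but with the transverse Laplacian replaced by the basic one. We then obtain the second identity by pairing the first with $\eta$. Fix a point and choose a local transverse orthonormal frame $\{\mathbf{e}_i\}$ of basic vector fields with $(\nD_{\mathbf{e}_i}\mathbf{e}_j)=0$ at the point, and let $\{\theta^i\}$ be its dual coframe.

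On basic forms the exterior derivative is $d=\sum_i\theta^i\wedge\nD_{\mathbf{e}_i}$, since $\nD$ is torsion free on $Q$. By (2.4), the basic divergence is $\text{div}_B=\sum_i i_{\mathbf{e}_i}\nD_{\mathbf{e}_i}-i_{\tau_b}$. Substituting these into $\Db=-(\text{div}_Bd+d\text{div}_B)$ splits $\Db$ into two pieces.

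\begin{enumerate}
\item The $\text{div}_Q$ piece is $-(\text{div}_Qd+d\,\text{div}_Q)$. On a basic $1$-form the standard computation with the curvature (2.2) gives
\begin{equation*}
-\sum_i\nD_{\mathbf{e}_i}\nD_{\mathbf{e}_i}\eta+Ric^Q(\eta).
\end{equation*}
\item The $i_{\tau_b}$ piece is $d\,i_{\tau_b}\eta+i_{\tau_b}d\eta=L_{\tau_b}\eta$, by Cartan's formula.
\end{enumerate}

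Next we rewrite the rough Laplacian through the true adjoint. Applying (2.4) to the $2$-tensor $\nD_{tr}\eta$ gives
\begin{equation*}
\nD_{tr}^*\nD_{tr}\eta=-\text{div}_B\nD_{tr}\eta=-\sum_i\nD_{\mathbf{e}_i}\nD_{\mathbf{e}_i}\eta+\nD_{\tau_b}\eta .
\end{equation*}
Hence $-\sum_i\nD_{\mathbf{e}_i}\nD_{\mathbf{e}_i}\eta=\nD_{tr}^*\nD_{tr}\eta-\nD_{\tau_b}\eta$. Adding the two pieces yields
\begin{equation*}
\Db\eta=Ric^Q(\eta)+\nD_{tr}^*\nD_{tr}\eta+(L_{\tau_b}-\nD_{\tau_b})\eta,
\end{equation*}
which is (2.6) with $A_{\tau_b}=L_{\tau_b}-\nD_{\tau_b}$. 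Compactness is used only to justify that $-\text{div}_B$ is the formal adjoint of $\nD_{tr}$ (cf. \cite{AL,Ton}), so that the formula is stated in terms of $\nD_{tr}^*$.

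For (2.7), apply $\Db$ on functions to $f=\tfrac12|\eta|^2$. Then $\Db f=-\text{div}_B\,df$ and $df=g_Q(\nD_{tr}\eta,\eta)$. Taking $\text{div}_B$ by the same frame formula gives
\begin{equation*}
-\tfrac12\Db|\eta|^2=|\nD_{tr}\eta|^2-g_Q(\nD_{tr}^*\nD_{tr}\eta,\eta),
\end{equation*}
where the $i_{\tau_b}$ terms are exactly those absorbed into $\nD_{tr}^*$. Finally, replace $\nD_{tr}^*\nD_{tr}\eta$ using (2.6), namely $\nD_{tr}^*\nD_{tr}\eta=\Db\eta-Ric^Q(\eta)-A_{\tau_b}\eta$. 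Pairing with $\eta$ gives
\begin{equation*}
-\tfrac12\Db|\eta|^2=Ric^Q(\eta^\sharp,\eta^\sharp)+|\nD_{tr}\eta|^2+g_Q((A_{\tau_b}\eta)^\sharp,\eta^\sharp)-g_Q(\eta^\sharp,(\Db\eta)^\sharp),
\end{equation*}
which is (2.7).

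The main obstacle is bookkeeping of signs and of the $\tau_b$-terms. One must check that the $-i_{\tau_b}$ in $\text{div}_B$ on $1$-forms produces $+L_{\tau_b}$ through Cartan's formula, whereas on $2$-tensors it produces the $+\nD_{\tau_b}$ that becomes part of $\nD_{tr}^*$. One must also fix the curvature sign convention of (2.2) so that the commutator term comes out as $+Ric^Q(\eta)$. I would verify all signs first on a flat Riemannian foliation with constant $\tau_b$, where every term can be computed explicitly.
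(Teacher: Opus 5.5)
The paper gives no proof of this statement; it quotes it from \cite{Jun}. Your argument is correct and is the standard derivation used there: $\text{div}_B=\text{div}_Q-i_{\tau_b}$ together with Cartan's formula gives $\Db=-(\text{div}_Qd+d\,\text{div}_Q)+L_{\tau_b}$, the classical Weitzenb\"ock formula on local quotients gives the first piece, and $\nD_{tr}^*\nD_{tr}=-\sum_i\nD_{\mathbf{e}_i}\nD_{\mathbf{e}_i}+\nD_{\tau_b}$ produces $A_{\tau_b}$; the second identity then follows exactly as you do, from $\text{div}_B\,d\bigl(\tfrac12|\eta|^2\bigr)=|\nD_{tr}\eta|^2-g_Q(\nD_{tr}^*\nD_{tr}\eta,\eta)$ and substitution of the first.
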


Similarly, Habib and Richardson have established the following version of Weitzenb\"ock formula, which is useful for the later content \cite{HR}.
\begin{equation}
    \tilde{\Delta}\eta=Ric^Q(\eta)+\nD_{tr}^*\nD_{tr}\eta+\dfrac{1}{4}|\kappa|^2\eta,
\end{equation} where $\tilde{\Delta}$ is an elliptic differential operator defined as follows. 

\begin{equation}
    \tilde{\Delta}=(d-\dfrac{1}{2}\kappa_b\wedge)\circ(-\text{div}_B-\dfrac{1}{2}i_{\kappa_b})+(-\text{div}_B-\dfrac{1}{2}i_{\kappa_b})\circ(d-\dfrac{1}{2}\kappa_b\wedge).
\end{equation}

\section{Symmetric tautness tensors}

In this section, we recall the following property of transverse divergence formula on a basic symmetric $2$-tensors. Let $v$ be a basic symmetric $2$-tensor on a compact foliated manifold $(M,\F,g)$, where $\F$ is a Riemannian foliation. Then we have the following.

\begin{equation}
    \int g_Q(-\text{div}_Q(v),\kappa_b)=\int g_Q(v,\nD_{tr}\kappa_b-\kappa_b\otimes\kappa_b).
\end{equation}
Motivated by the above equation, we introduce the \textit{symmetric tautness tensor} as follows.
\begin{defn}
A symmetric tautness tensor $T_{\kappa}$ is a basic $2$-tensor defined by
\begin{equation}
 T_{\kappa}=\nD_{tr}\kappa_b-\kappa_b\otimes\kappa_b.
\end{equation}
\end{defn}

By the criterion of Alvarez-L\'opez, we assume that the basic mean curvature $\kappa_b$ satisfies $\text{div}_B\kappa_b=0$ and $d\kappa_b=0$. Then we have $\displaystyle{\text{tr}_Q(T_{\kappa})=0}$ by direct calculation. Therefore, we always consider that $T_{\kappa}$ is \textit{transverse trace-free} by the assumption $\text{div}_B\kappa_b= 0$ throughout this paper.

For later use, we recall the following lemma.
\begin{lem} On a compact foliated manifold with a Riemannian foliation with $\text{div}_B\kappa_b= 0$,
    $\nD_{tr}\kappa_b=0$ if and only if $\kappa_b=0$.
\end{lem}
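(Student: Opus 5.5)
One direction is immediate: if $\kappa_b=0$ then $\nD_{tr}\kappa_b=0$. For the converse, assume $\nD_{tr}\kappa_b=0$. I would show that $\kappa_b$ then has constant length and zero divergence in both senses, and that this forces $\kappa_b=0$ by integrating a transverse divergence against the mean curvature.

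First, from $\nD_{tr}\kappa_b=0$ the transverse divergence vanishes:
\[
\text{div}_Q\kappa_b=\sum_i (\nD_{\mathbf{e}_i}\kappa_b)(\mathbf{e}_i)=0 .
\]
The standing assumption $\text{div}_B\kappa_b=0$, combined with the relation $\text{div}_Q=\text{div}_B+i_{\tau_b}$ from Section 2, then gives
\[
0=\text{div}_Q\kappa_b=\text{div}_B\kappa_b+|\kappa_b|^2=|\kappa_b|^2 .
\]
This already yields $\kappa_b=0$ pointwise.

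If one prefers to avoid pointwise use of the divergence relation, there is an integral route. By metric compatibility of $\nD$, the condition $\nD_{tr}\kappa_b=0$ makes $|\kappa_b|^2$ a basic function with vanishing transverse derivative, hence constant along $M$. Applying $\int_M\text{div}_B(\cdot)=0$ (the adjointness of $-\text{div}_B$ to $\nD$, tested against the constant function $1$) to the basic vector field $\tau_b$ gives
\[
0=\int_M \text{div}_B\tau_b=\int_M\bigl(\text{div}_Q\tau_b-|\tau_b|^2\bigr)=-|\kappa_b|^2\,\text{vol}(M).
\]
So the constant $|\kappa_b|^2$ is zero.

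The only point needing care is the sign convention in $\text{div}_Q=\text{div}_B+i_{\tau_b}$ as applied to $1$-forms, namely that $i_{\tau_b}\kappa_b=|\kappa_b|^2\geq 0$. Whichever sign convention is used, the identity equates a quantity that vanishes by hypothesis with $\pm|\kappa_b|^2$, so the conclusion $\kappa_b=0$ holds either way. I expect no further obstacle.
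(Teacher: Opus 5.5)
Your first argument is exactly the paper's proof: tracing $\nD_{tr}\kappa_b=0$ gives $\text{div}_Q\kappa_b=0$, and then $\text{div}_Q=\text{div}_B+i_{\tau_b}$ together with $\text{div}_B\kappa_b=0$ yields $|\kappa_b|^2=0$, so the proposal is correct. Your integral variant is also valid, though it still uses the pointwise relation inside the integral. It shows that neither the hypothesis $\text{div}_B\kappa_b=0$ nor the constancy of $|\kappa_b|^2$ is needed, since $\int_M|\kappa_b|^2=\int_M\text{div}_Q\kappa_b-\int_M\text{div}_B\kappa_b=0$ already suffices.
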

\begin{proof}
 To prove, we only need to suppose that $\nD_{tr}\kappa_b=0$. Taking the transverse trace, we have the following.
 \begin{equation}
     0=\text{tr}_Q(\nD_{tr}\kappa_b)=\text{div}_Q\kappa_b=\text{div}_B\kappa_b+|\kappa_b|^2=|\kappa_b|^2.
 \end{equation} Hence, $\kappa_b=0$ is obtained.
\end{proof}
Similar to the behavior of $\nD_{tr}\kappa_b,$ we have the following theorem.

\begin{thm}
Let $(M,\F,g)$ be a compact foliated manifold with a Riemannian foliation satisfying $\text{div}_B\kappa_b= 0$. 
Then, $\kappa_b=0$ if and only if $T_{\kappa}=0.$
\end{thm}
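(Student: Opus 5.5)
The direction $\kappa_b=0\Rightarrow T_{\kappa}=0$ is immediate from the definition, since both $\nD_{tr}\kappa_b$ and $\kappa_b\otimes\kappa_b$ vanish. For the converse we assume $T_{\kappa}=0$, that is, $\nD_{tr}\kappa_b=\kappa_b\otimes\kappa_b$, and aim to show $\kappa_b=0$ by an integration argument over the compact manifold $M$.

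Taking the transverse trace only gives $\text{div}_Q\kappa_b=|\kappa_b|^2$. Since $\text{div}_B\kappa_b=0$, this is the identity $\text{div}_Q\kappa_b=\text{div}_B\kappa_b+|\kappa_b|^2$, which holds for any foliation; so the trace carries no information, unlike in Lemma 3.2. Instead we contract $T_{\kappa}=0$ with $\tau_b$. This gives
\begin{equation*}
\nD_{\tau_b}\kappa_b=|\kappa_b|^2\kappa_b .
\end{equation*}
Because $d\kappa_b=0$, the tensor $\nD_{tr}\kappa_b$ is symmetric, so $\nD_{\tau_b}\kappa_b=\tfrac12 d|\kappa_b|^2$. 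Writing $f=|\kappa_b|^2$, a basic function, we obtain $\tfrac12 df=f\kappa_b$.

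Next we pair this with $\kappa_b$ and integrate. On one side,
\begin{equation*}
\int_M g_Q(df,\kappa_b)=-\int_M f\,\text{div}_B\kappa_b=0,
\end{equation*}
using that $-\text{div}_B$ is the formal adjoint of $d=\nD$ on basic forms. On the other side, $\int_M 2f\,|\kappa_b|^2=2\int_M|\kappa_b|^4$. Hence $\int_M|\kappa_b|^4=0$, so $\kappa_b=0$.

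As a consistency check, the same conclusion follows from formula (3.1) with $v=g_Q$. The left side is $\int g_Q(-\text{div}_Q g_Q,\kappa_b)=0$ because $\nD g_Q=0$. The right side is $\int\text{tr}_Q T_{\kappa}$, which vanishes identically, so this route gives nothing new; the argument above is the one to use.

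The main obstacle is justifying the integration by parts $\int g_Q(df,\kappa_b)=-\int f\,\text{div}_B\kappa_b$ for the basic function $f$. This is exactly the defining adjointness property of $\text{div}_B$ recorded in (2.4), applied to basic forms on compact $M$. A second point to check is the symmetry of $\nD_{tr}\kappa_b$, which follows from $d\kappa_b=0$ and the torsion-freeness of $\nD$.
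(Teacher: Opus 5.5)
Your proof is correct and is essentially the paper's argument. The paper expands $\int_M|T_{\kappa}|^2=\int_M|\nD_{tr}\kappa_b|^2+\int_M|\kappa_b|^4$, killing the cross term via the same integration by parts, $\int_M g_Q(\nD_{\tau_b}\kappa_b,\kappa_b)=-\frac{1}{2}\int_M|\kappa_b|^2\,\text{div}_B\kappa_b=0$; you instead pair $T_{\kappa}$ with $\kappa_b\otimes\kappa_b$ rather than with itself, which gives $\int_M|\kappa_b|^4=0$ directly (and your symmetry step is harmless but unnecessary, since metric compatibility alone gives $g_Q(\nD_{\tau_b}\kappa_b,\kappa_b)=\frac{1}{2}\tau_b(|\kappa_b|^2)$).
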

\begin{proof} We only need to prove $T_{\kappa}=0$ implies $\kappa_b=0$. Since we have  \begin{equation}
\int_M g_Q(\nD_{tr}\kappa_b,\kappa_b\otimes\kappa_b)=\int_M g_Q(\nD_{\kappa_b}\kappa_b,\kappa_b)=-\dfrac{1}{2}\int_M \text{div}_B\kappa_b|\kappa_b|^2=0,
\end{equation} we have \begin{equation}
    \int_Mg_Q(T_{\kappa},T_{\kappa})=\int_M|\nD_{tr}\kappa_b|^2+\int_M|\kappa_b|^4=0.
\end{equation} Hence, $\kappa_b=0$ by positive definiteness and Lemma 3.2.
\end{proof}

Also, the following corollary of the above theorem should be established for later use.
\begin{cor}
On a compact Riemannian manifold with Riemannian foliation with $\text{div}_B\kappa_b=0$, $\text{div}_B(T_{\kappa})$ is exact if and only if $\kappa_b=0$.
\end{cor}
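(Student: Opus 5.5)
We have to show: $\text{div}_B(T_\kappa)$ is exact iff $\kappa_b=0$. The direction $\kappa_b=0\Rightarrow T_\kappa=0\Rightarrow \text{div}_B T_\kappa=0$ (exact) is trivial, so the content is the converse. The plan is to pair $\text{div}_B(T_\kappa)$ with $\kappa_b$ and use that $\kappa_b$ is basic coclosed. Since $\text{div}_B\kappa_b=0$, the basic $1$-form $\kappa_b$ is $L^2$-orthogonal to every exact basic $1$-form: for basic $f$,
\begin{equation*}
\int_M g_Q(df,\kappa_b)=-\int_M f\,\text{div}_B\kappa_b=0,
\end{equation*}
because $-\text{div}_B$ is the formal adjoint of $\nD_{tr}$, which acts as $d$ on basic functions. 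Hence if $\text{div}_B(T_\kappa)=df$, then $\int_M g_Q(\text{div}_B(T_\kappa),\kappa_b)=0$.

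Next, compute the same integral by adjointness. Since $-\text{div}_B$ is the formal adjoint of $\nD_{tr}$ on basic tensors,
\begin{equation*}
\int_M g_Q(\text{div}_B(T_\kappa),\kappa_b)=-\int_M g_Q(T_\kappa,\nD_{tr}\kappa_b).
\end{equation*}
Because $d\kappa_b=0$, $\nD_{tr}\kappa_b$ is symmetric, so pairing against the symmetric tensor $T_\kappa$ loses nothing. Substituting $T_\kappa=\nD_{tr}\kappa_b-\kappa_b\otimes\kappa_b$ and using the identity $\int_M g_Q(\nD_{tr}\kappa_b,\kappa_b\otimes\kappa_b)=0$ from the proof of Theorem 3.3 gives
\begin{equation*}
\int_M g_Q(\text{div}_B(T_\kappa),\kappa_b)=-\int_M|\nD_{tr}\kappa_b|^2 .
\end{equation*}

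Combining the two steps, exactness of $\text{div}_B(T_\kappa)$ forces $\nD_{tr}\kappa_b=0$, and Lemma 3.2 then yields $\kappa_b=0$.

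The only point needing care is the adjointness step: $-\text{div}_B$ must be the formal adjoint of $\nD_{tr}$ both on basic functions (against basic $1$-forms) and on basic $1$-forms (against basic $2$-tensors). This is exactly how the basic divergence was introduced in Section 2, and equation (3.1) is the $2$-tensor instance of it. Otherwise the argument is a short integration-by-parts computation reusing the vanishing cross term from Theorem 3.3.
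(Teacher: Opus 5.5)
Your proposal is correct and matches the paper's proof essentially step for step. You pair $\text{div}_B(T_{\kappa})=df$ with the basic coclosed $\kappa_b$ to get zero, integrate by parts and use the vanishing cross term $\int_M g_Q(\nabla_{tr}\kappa_b,\kappa_b\otimes\kappa_b)=0$ to rewrite the pairing as $-\int_M|\nabla_{tr}\kappa_b|^2$, and then conclude $\kappa_b=0$ by Lemma 3.2.
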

\begin{proof} We only need to show $\text{div}_B(T_{\kappa})=df$ for some basic $f$ implies $\kappa_b=0$.
Suppose that $\text{div}_B(T_{\kappa})=df$. Then, we have \begin{equation}
\int_M g_Q(\text{div}_B(T_{\kappa}),\kappa_b)=-\int_M f\text{div}_B\kappa_b=0.
\end{equation} On the other hand, \begin{equation}
\int_M g_Q(\text{div}_B(T_{\kappa}),\kappa_b)=-\int_M g_Q(\nD_{tr}\kappa_b,T_{\kappa})
\end{equation} straightforwardly. As \begin{equation}
\int_M g_Q(\nD_{tr}\kappa_b,\kappa_b\otimes\kappa_b)=0
\end{equation} is calculated in (3.4), we have \begin{equation}
\int_M g_Q(\text{div}_B(T_{\kappa}),\kappa_b)=-\int_M |\nD_{tr}\kappa_b|^2.
\end{equation} That is, \begin{equation}
-\int_M |\nD_{tr}\kappa_b|^2=0
\end{equation} is derived. Thus, $\kappa_b=0$ by Lemma 3.2.
\end{proof}

Hence, a symmetric tautness tensor $T_{\kappa}$ also determines whether a Riemannian foliation $\F$ on a compact manifold is taut or nontaut, similar to the Alvarez cohomology class $[\kappa_b]$ \cite{AL}. Morever, the following remark explains that a symmetric $2$-tensor $\nD_{tr}s-s\otimes s$ cannot be replaced with $T_{\kappa}$ to observe the properties of transverse geometry. 
\begin{rmk}\normalfont
Let us consider a basic symmetric $2$-tensor $\nD_{tr}s-s\otimes s$ for a basic closed 1-form $s$, instead of $T_{\kappa}$. Then we have the following $L^2$ inner product on basic symmetric $2$-tensors $v$ and $\nD_{tr}s-s\otimes s$.
\begin{equation}
    \int g_Q(v,\nD_{tr}s-s\otimes s),
\end{equation} which cannot be identified with the following equation.
\begin{equation}
\int g_Q(v,T_{\kappa})=-\int g_Q(\text{div}_Q(v),\kappa_b).
\end{equation} Hence, we may not consider the tensor $\nD_{tr}s-s\otimes s$ unless $s=c\kappa_b$ for some constant $c$.
\end{rmk}

\section{Applications of symmetric tautness tensor}
Now we are going to introduce some geometric properties which applied the symmetric tautness tensor. In particular, it should be remarked that $\text{div}_BT_{\kappa}$ is deeply related to the transverse Ricci curvature $Ric^Q$ due to the following proposition.

\begin{prop}
On a compact foliated manifold $(M,\F,g)$ with basic-harmonic mean curvature form $\kappa_b$, the following formula holds.
\begin{equation}
\text{div}_B(T_{\kappa})=Ric^Q(\tau_b,\cdot).
\end{equation}
\end{prop}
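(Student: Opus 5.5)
The plan is a direct local computation in a transverse orthonormal frame $\{\mathbf{e}_i\}$, using $\text{div}_B=\text{div}_Q-i_{\tau_b}$ from (2.4). Basic-harmonicity of $\kappa_b$ gives $d\kappa_b=0$ and $\text{div}_B\kappa_b=0$. Since all tensors involved are basic, only transverse directions contribute to the derivatives, so the computation is formally that of Riemannian geometry on $(Q,g_Q)$ with the transverse connection $\nD$. I will treat the two pieces $\nD_{tr}\kappa_b$ and $\kappa_b\otimes\kappa_b$ separately and then subtract.

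\emph{First term.} Because $d\kappa_b=0$, the tensor $\nD_{tr}\kappa_b$ is symmetric: $(\nD_{\mathbf{e}_i}\kappa_b)(\mathbf{e}_j)=(\nD_{\mathbf{e}_j}\kappa_b)(\mathbf{e}_i)$. I will compute
\[
\bigl(\text{div}_Q\nD_{tr}\kappa_b\bigr)(\mathbf{e}_j)=\sum_i(\nD^2_{\mathbf{e}_i,\mathbf{e}_i}\kappa_b)(\mathbf{e}_j)=\sum_i(\nD^2_{\mathbf{e}_i,\mathbf{e}_j}\kappa_b)(\mathbf{e}_i).
\]
I then commute the second covariant derivatives via the transverse Ricci identity. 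This gives $d(\text{div}_Q\kappa_b)+Ric^Q(\tau_b,\cdot)$. Next I use
\[
\text{div}_Q\kappa_b=\text{div}_B\kappa_b+|\kappa_b|^2=|\kappa_b|^2 .
\]
By symmetry of $\nD_{tr}\kappa_b$ we have $d|\kappa_b|^2=2\nD_{\tau_b}\kappa_b$. Together with $i_{\tau_b}\nD_{tr}\kappa_b=\nD_{\tau_b}\kappa_b$ (symmetry again), this yields
\[
\text{div}_B(\nD_{tr}\kappa_b)=\nD_{\tau_b}\kappa_b+Ric^Q(\tau_b,\cdot).
\]

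\emph{Second term.} By the Leibniz rule,
\[
\text{div}_Q(\kappa_b\otimes\kappa_b)=(\text{div}_Q\kappa_b)\kappa_b+\nD_{\tau_b}\kappa_b=|\kappa_b|^2\kappa_b+\nD_{\tau_b}\kappa_b,
\]
while $i_{\tau_b}(\kappa_b\otimes\kappa_b)=|\kappa_b|^2\kappa_b$. Hence $\text{div}_B(\kappa_b\otimes\kappa_b)=\nD_{\tau_b}\kappa_b$. Subtracting the two results, the $\nD_{\tau_b}\kappa_b$ terms cancel and we obtain $\text{div}_B(T_\kappa)=Ric^Q(\tau_b,\cdot)$.

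The main obstacle is the commutation step. I must verify that the Ricci identity holds for basic $1$-forms with the transverse connection, which follows from $R^Q$ being basic and from $\nD_U$ acting as Lie derivative on basic objects. I must also track the sign convention: the paper's $R^Q(X,Y)Z$ in (2.2) carries the opposite sign to the usual one. I would fix the sign by checking that the result is consistent with the Weitzenb\"ock formula (Theorem 2.3), and only then conclude that the commutator contributes $+Ric^Q(\tau_b,\cdot)$ with the paper's definition of $Ric^Q$.
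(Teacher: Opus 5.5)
Your proof is correct. It reaches the same two intermediate identities as the paper but handles the one nontrivial step differently.

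\textbf{Common ground.} Your second-term computation $\text{div}_B(\kappa_b\otimes\kappa_b)=\nabla_{\tau_b}\kappa_b$ is exactly the paper's (4.3). Your first-term result $\text{div}_B(\nabla_{tr}\kappa_b)=\nabla_{\tau_b}\kappa_b+Ric^Q(\tau_b,\cdot)$ is also what the paper obtains.

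\textbf{Where the routes differ.} The paper gets the first-term identity by quoting Jung's Weitzenb\"ock formula (2.7) for the basic-harmonic form $\kappa_b$, so that $\Delta_B\kappa_b=0$. It rewrites $\nabla_{tr}^*=-\text{div}_B$ and then disposes of the extra term $A_{\tau_b}\kappa_b=L_{\tau_b}\kappa_b-\nabla_{\tau_b}\kappa_b$ via Cartan's formula, $L_{\tau_b}\kappa_b=d\,i_{\tau_b}\kappa_b=d|\kappa_b|^2=2\nabla_{\tau_b}\kappa_b$. You instead commute the second covariant derivatives by hand. In effect you prove the transverse analogue of $\text{div}\,\text{Hess}f=d\Delta f+Ric(\nabla f)$ for a local primitive of $\kappa_b$. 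You then pass from $\text{div}_Q$ to $\text{div}_B$ using $\text{div}_Q\kappa_b=|\kappa_b|^2$.

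\textbf{What each buys.} The paper's route is shorter. It inherits both the sign of $Ric^Q$ and the treatment of leafwise components of brackets from the cited formula. Yours is self-contained and makes transparent that the statement is a pointwise identity using only $d\kappa_b=0$ and $\text{div}_B\kappa_b=0$. It does require the two justifications you flag, and both go through.
\begin{itemize}
\item \emph{Transverse Ricci identity.} For basic transverse $X,Y$, the leafwise part of $[X,Y]$ contributes nothing, because $\nabla_U\kappa_b=L_U\kappa_b=0$ for basic $\kappa_b$. Alternatively, work on a local quotient of the Riemannian submersion, where the transverse connection is the Levi-Civita connection.
\item \emph{Sign of the curvature term.} This is only a question of which convention $Ric^Q$ denotes. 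With the standard curvature convention the commutator contributes $+Ric(\tau_b,\cdot)$. The paper's $Ric^Q$ is the standard Ricci tensor, as its appearance in (2.7) and its negative value on the hyperbolic transverse structure of Carri\`ere's torus confirm. So you do not actually need to lean on (2.7) to fix the sign.
\end{itemize}
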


\begin{proof}
Applying (2.7), we obtain
\begin{equation}
0=Ric^Q\kappa_b+\nD_{tr}^*\nD_{tr}\kappa_b+L_{\tau_b}\kappa_b-\nD_{\tau_b}\kappa_b.
\end{equation} On the other hand, we also have
\begin{equation}
\text{div}_B(\kappa_b\otimes\kappa_b)=\nD_{\tau_b}\kappa_b.
\end{equation}
Subtracting (4.3) to (4.2), the following is deduced.
\begin{equation}
\text{div}_B(T_{\kappa})=Ric^Q\kappa_b+L_{\tau_b}\kappa_b-2\nD_{\tau_b}\kappa_b
\end{equation}
By Cartan's magic formula, the following equation is straightforward.
\begin{equation}
L_{\tau_b}=2\nD_{\tau_b}\kappa_b.
\end{equation}
Thus,
\begin{equation}
\text{div}_B(T_{\kappa})=Ric^Q(\tau_b,\cdot)
\end{equation} is derived, as desired.
\end{proof}
 
 Hence, we have the following result.

\begin{thm}
Let $\F$ be a Riemannian foliation on a compact foliated manifold $(M,\F,g)$. Then $\F$ is necessarily taut if and only if $ Ric^Q(\tau_b,\tau_b)$ is nonnegative.
\end{thm}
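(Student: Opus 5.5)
The plan is to pair Proposition 4.1 with $\kappa_b$ and integrate, using the same identities that appear in the proofs of Theorem 3.3 and Corollary 3.4. As throughout, we normalize $\text{div}_B\kappa_b=0$ and $d\kappa_b=0$. One direction is immediate: if $\F$ is taut, this normalization forces $\kappa_b=0$ (a basic-harmonic exact form vanishes), so $\tau_b=0$ and $Ric^Q(\tau_b,\tau_b)=0\geq 0$.

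For the converse, assume $Ric^Q(\tau_b,\tau_b)\geq 0$ pointwise. By Proposition 4.1,
\begin{equation*}
\int_M Ric^Q(\tau_b,\tau_b)=\int_M g_Q(\text{div}_B(T_{\kappa}),\kappa_b).
\end{equation*}
Since $-\text{div}_B$ is the formal adjoint of $\nD_{tr}$ on basic forms, this equals $-\int_M g_Q(T_{\kappa},\nD_{tr}\kappa_b)$. Expanding $T_{\kappa}=\nD_{tr}\kappa_b-\kappa_b\otimes\kappa_b$, the cross term
\begin{equation*}
\int_M g_Q(\nD_{tr}\kappa_b,\kappa_b\otimes\kappa_b)
\end{equation*}
vanishes by the computation (3.4). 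Exactly as in the proof of Corollary 3.4, we therefore obtain
\begin{equation*}
\int_M Ric^Q(\tau_b,\tau_b)=-\int_M|\nD_{tr}\kappa_b|^2\leq 0.
\end{equation*}

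Combined with the pointwise hypothesis $Ric^Q(\tau_b,\tau_b)\geq 0$, the left side is nonnegative, so both sides vanish. This gives $\nD_{tr}\kappa_b=0$, and Lemma 3.2 yields $\kappa_b=0$. Hence $\kappa_b$ is exact and $\F$ is taut; equivalently, $T_{\kappa}=0$ by Theorem 3.3.

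The step needing care is the integration by parts: one must make sure the adjointness of $-\text{div}_B$ and $\nD_{tr}$ holds for the pairing of a basic $1$-form with a basic symmetric $2$-tensor on a compact manifold. Here the point is that using $\text{div}_B$ rather than $\text{div}_Q$ absorbs the boundary-like term $i_{\tau_b}$. One should also check that Proposition 4.1 applies, which requires $\kappa_b$ to be basic-harmonic; this holds since $d\kappa_b=0$ and $\text{div}_B\kappa_b=0$ by the Alvarez-L\'opez normalization. The hypothesis is used only in integrated form, so the argument in fact shows more: $\int_M Ric^Q(\tau_b,\tau_b)\geq 0$ already forces tautness.
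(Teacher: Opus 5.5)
Your proposal is correct and is essentially the paper's proof. Pairing Proposition 4.1 with $\kappa_b$ and applying the integration-by-parts identity (3.9) from Corollary 3.4 (with the cross term killed by (3.4)) gives $\int_M Ric^Q(\tau_b,\tau_b)=-\int_M|\nD_{tr}\kappa_b|^2$, and Lemma 3.2 finishes. Your explicit treatment of the taut direction via the normalization $\text{div}_B\kappa_b=0$, and your remark that only $\int_M Ric^Q(\tau_b,\tau_b)\geq 0$ is needed, are both valid and make the paper's terse argument slightly sharper.
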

\begin{proof}
As  \begin{equation}
-\int_M |\nD_{tr}\kappa_b|^2dV=\int_M Ric^Q(\tau_b,\tau_b)dV
\end{equation} is implied by (3.9) and (4.1), $|\nD_{tr}\kappa_b|^2=0$ if and only if $Ric^Q(\tau_b,\tau_b) \geq 0$ by Lemma 3.2. Therefore, the assertion is justified by Lemma 3.2 again.
\end{proof}

\begin{rmk}\normalfont
 Applying Habib-Richardson type Weitzenb\"ock formula (2.9), we have the following observation. If we choose a basic-harmonic $1$-form $\eta,$ then we have \begin{equation}
     \int g_Q(\tilde{\Delta}\eta,\eta)=\dfrac{1}{4}\int|\kappa_b|^2|\eta|^2
 \end{equation} by a straightforward calculation. Hence, we have 
 \begin{equation}
    \int Ric^Q(\eta^{\sharp},\eta^{\sharp})+\int|\nD_{tr}\eta|^2=0.
 \end{equation} That is, for a nonnegative $Ric^Q$ with $Ric^Q(p)>0$ for some point $p$, $\Db\eta=0$ implies $\eta=0$ and hence any basic closed $1$-form is exact, including $\kappa_b$. Therefore, an alternative proof of Corollary 1.2 is established by combining this remark and the result of Theorem 2.1.
\end{rmk}

On the other hand, we have the computation of the generalized transverse rough Laplacian $\nD_{tr}^*\nD_{tr}\kappa_b$ from Proposition 4.1, as follows.
\begin{equation}
\nD_{tr}^*\nD_{tr}\kappa_b=-Ric^Q(\kappa_b)-\dfrac{1}{2}d|\kappa_b|^2.
\end{equation}

Hence, we have 
\begin{equation}
\nD^*\nD\kappa_b-Ric^Q(\kappa_b)=-2Ric^Q(\kappa_b)-\dfrac{1}{2}d|\kappa_b|^2,
\end{equation}

where the differential operator of the left-hand side is called the transverse Jacobi operator of $\F$ and denoted by $J^Q$.

Now we define the transverse Jacobi field $\eta$ of $\F$ as follows. 
\begin{equation}
    J^Q(\eta)=0.
\end{equation}
Then, the following result is established as follows (cf. Theorem 1.3).
\begin{thm}
    Let $(M,\F,g)$ be a compact foliated Riemannian manifold with a Riemannian foliation. Then $\kappa_b$ is a transverse Jacobi field of $\F$ if and only if $\F$ is taut. 
\end{thm}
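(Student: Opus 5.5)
The plan is to use the identity (4.11), which rewrites the transverse Jacobi operator applied to $\kappa_b$ as
\begin{equation*}
J^Q(\kappa_b)=-2Ric^Q(\kappa_b)-\dfrac{1}{2}d|\kappa_b|^2 ,
\end{equation*}
and then to pair $J^Q(\kappa_b)$ with $\kappa_b$ in $L^2$. Throughout, the standing assumption $\text{div}_B\kappa_b=0$ is in force, so $\kappa_b$ is basic harmonic; Proposition 4.1 and (4.10) therefore apply.

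\textbf{Taut $\Rightarrow$ Jacobi.} If $\F$ is taut, then $\kappa_b$ is exact. Combined with $\text{div}_B\kappa_b=0$, this gives $\kappa_b=0$ on a compact manifold: writing $\kappa_b=df$, we get $\int|\kappa_b|^2=-\int f\,\text{div}_B\kappa_b=0$. Hence $J^Q(\kappa_b)=0$ trivially.

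\textbf{Jacobi $\Rightarrow$ taut.} Suppose $J^Q(\kappa_b)=0$. By (4.11),
\begin{equation*}
2Ric^Q(\kappa_b)=-\tfrac12 d|\kappa_b|^2 ,
\end{equation*}
so $Ric^Q(\kappa_b)$ is an exact basic $1$-form. Take the $L^2$ inner product with $\kappa_b$. The exact term integrates to zero, since
\begin{equation*}
\int g_Q(d|\kappa_b|^2,\kappa_b)=-\int|\kappa_b|^2\,\text{div}_B\kappa_b=0,
\end{equation*}
using that $-\text{div}_B$ is the formal adjoint of $d$ on basic forms. It follows that $\int Ric^Q(\tau_b,\tau_b)=0$. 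Now apply (4.7), obtained from (3.9) and Proposition 4.1:
\begin{equation*}
\int|\nD_{tr}\kappa_b|^2=-\int Ric^Q(\tau_b,\tau_b)=0 .
\end{equation*}
Thus $\nD_{tr}\kappa_b=0$, and Lemma 3.2 gives $\kappa_b=0$, so $\F$ is taut.

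An alternative route avoids integrating the Ricci term directly. Since $Ric^Q(\kappa_b)=\text{div}_B T_\kappa$ by Proposition 4.1, the equation $J^Q(\kappa_b)=0$ says that $\text{div}_B(T_\kappa)=-\tfrac14 d|\kappa_b|^2$ is exact. Corollary 3.4 then yields $\kappa_b=0$ immediately. I would present this version as the main argument and keep the integral computation as a check.

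\textbf{Main obstacle.} The analytic content is minimal; the delicate step is justifying (4.11), namely that $J^Q$ applied to $\kappa_b$ equals $-2Ric^Q(\kappa_b)-\tfrac12 d|\kappa_b|^2$. This rests on (4.10), which in turn uses the Weitzenböck formula (2.7) with $\Db\kappa_b=0$, the identity $L_{\tau_b}\kappa_b=d|\kappa_b|^2=2\nD_{\tau_b}\kappa_b$ (valid for closed $\kappa_b$), and the identification of the operator $\nD^*\nD$ in $J^Q$ with $\nD_{tr}^*\nD_{tr}$ acting on basic forms. I would take the paper's identification of $\nD^*$ with $-\text{div}_B$ as given. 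If instead $\nD^*=-\text{div}_Q$ were intended, an extra term $i_{\tau_b}\nD_{tr}\kappa_b=\nD_{\tau_b}\kappa_b=\tfrac12 d|\kappa_b|^2$ appears. This term is still exact, so the argument through Corollary 3.4 goes through unchanged.
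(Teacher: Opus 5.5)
Your proposal is correct, and your main argument is the paper's proof: from $J^Q(\kappa_b)=0$ and (4.11) you get $Ric^Q(\kappa_b)=-\tfrac14 d|\kappa_b|^2$, so by Proposition 4.1 $\text{div}_B T_\kappa$ is exact, and Corollary 3.4 forces $\kappa_b=0$. Your $L^2$ pairing check just unpacks Corollary 3.4, and your treatment of the converse direction and your remark that using $-\text{div}_Q$ as $\nD^*$ only adds another exact term are both sound, though neither is needed for the argument.
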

\begin{proof}
    It suffices to show that $\kappa_b\in\ker J^Q$ implies $\kappa_b=0.$ If we assume that $J^Q(\kappa_b)=0,$ then we have
    \begin{equation}
        Ric^Q(\kappa_b)=-\dfrac{1}{4}d|\kappa_b|^2.
    \end{equation}
    Since we have shown Proposition 4.1, the above equation is deduced as follows.
    \begin{equation}
       \text{div}_BT_{\kappa}=-\dfrac{1}{4}d|\kappa_b|^2.
    \end{equation} Therefore, $\text{div}_BT_{\kappa}$ is an exact $1$-form and $\F$ is taut by Corollary 3.4.
\end{proof}

Generally, we have the following remark if the given transverse Riemannian geometry of a compact foliated manifold $(M,\F,g)$ satisfies $\displaystyle{Ric^Q=\lambda^Q\text{ }g_Q}$. i.e. $\F$ is transverse Einstein foliation. 

\begin{rmk}\normalfont
    Assume that the given Riemannian foliation $\F$ on a compact foliated manifold $(M,\F,g)$ is transverse Einstein. Then we have 
    \begin{equation}
        J^Q(\kappa_b)=-2\lambda^Q\kappa_b-\dfrac{1}{2}d|\kappa_b|^2.
    \end{equation} Thus, $J^Q(\kappa_b)$ is a basic closed $1$-form on $M$.
    Under the conditions above, if we additionally assume that $\kappa_b$ is an eigenvector of $J^Q$ corresponding to an eigenvalue $\mu >0,$ then we have the following.
    \begin{equation}
        (\mu+2\lambda^Q)\kappa_b=-\dfrac{1}{2}d|\kappa_b|^2.
    \end{equation} Therefore, $\F$ is taut unless $\mu=-2\lambda^Q$ and conversely, a nontaut transverse Einstein foliation with constant $|\kappa_b|$ is an eigenvector of $J^Q$ corresponding to $-2\lambda^Q$.
\end{rmk}

By the above remark, the behavior of a nontaut transverse Einstein foliation is different to those of a taut transverse Einstein foliation. In fact, there is an another distinction between a taut and a nontaut transverse Einstein foliation. To deal with, let us define the total transverse scalar curvature by the following (cf. \cite{Via}).
\begin{defn}
    A total transverse scalar curvature on a compact foliated manifold $(M,\F,g)$ is the following curvature functional.
    \begin{equation}
       \mathcal{S}^Q=\int_M S^QdV, 
    \end{equation} where $dV$ is the Riemannian volume form with respect to $g$.
\end{defn}

Specifically, we say \begin{equation}
    \bar{\mathcal{S}}^Q=\text{vol}(M)^{\frac{2-q}{q}}\mathcal{S}^Q
\end{equation} a \textit{normalized total transverse scalar curvature}.

Then the following calculation is obtained.
\begin{thm}(The first variational formula) Let $(M,\F)$ be a compact foliated manifold and let $g$ be a curve of bundle-like metric given by $g=g_F+g_Q(t)$. Then the following is calculated.
 \begin{equation}
       \dfrac{d}{dt}\bar{\mathcal{S}}^Q=-\text{vol}(M)^{\frac{2-q}{q}}\int_Mg_Q(\dfrac{d}{dt}g_Q,T_{\kappa}+Ric^Q-\dfrac{S^Q}{2}g_Q+\dfrac{q-2}{2q}\bar{S}^Qg_Q),  
\end{equation}  where $\bar{S}^Q$ is the average of $S^Q$.
\end{thm}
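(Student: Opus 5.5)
We differentiate $\bar{\mathcal{S}}^Q=\text{vol}(M)^{\frac{2-q}{q}}\int_M S^Q\,dV$ by the product rule, with $h=\frac{d}{dt}g_Q$. The leaf metric $g_{\F}$ is fixed, so $dV$ splits as the leaf volume times the transverse volume. This gives the volume variation
\begin{equation*}
\frac{d}{dt}dV=\frac{1}{2}\text{tr}_Q(h)\,dV,\qquad \frac{d}{dt}\text{vol}(M)=\frac12\int_M g_Q(h,g_Q)\,dV .
\end{equation*}
There are therefore three contributions: the variation of $S^Q$ itself, from Lemma 2.2; the variation of $dV$, giving $\frac12\int S^Q g_Q(h,g_Q)$; and the variation of the normalizing factor.

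The normalizing factor contributes
\begin{equation*}
\frac{2-q}{q}\text{vol}(M)^{\frac{2-q}{q}-1}\cdot\frac12\int_M g_Q(h,g_Q)\int_M S^Q=\text{vol}(M)^{\frac{2-q}{q}}\cdot\frac{2-q}{2q}\int_M g_Q(h,\bar S^Q g_Q),
\end{equation*}
where $\bar S^Q=\text{vol}(M)^{-1}\int_M S^Q$. Together with the $dV$ term and the term $-\int g_Q(h,Ric^Q)$ from Lemma 2.2, this already produces $-\text{vol}(M)^{\frac{2-q}{q}}\int g_Q\big(h,Ric^Q-\frac{S^Q}{2}g_Q+\frac{q-2}{2q}\bar S^Q g_Q\big)$. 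What remains is to show that the two divergence terms of Lemma 2.2 integrate to $-\int g_Q(h,T_\kappa)$.

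This is the main step. The divergences in Lemma 2.2 are transverse divergences $\text{div}_Q$, which are not adjoint to $\nD$, so they do not integrate to zero; the defect is governed by (1.1).

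For the first term, I write $\text{div}_Q^2 h=\text{div}_Q(\text{div}_Q h)$. The outer divergence acts on a basic $1$-form $\omega$ and gives $\int \text{div}_Q\omega=\int(\text{div}_B\omega+g_Q(\kappa_b,\omega))=\int g_Q(\kappa_b,\omega)$, since $\text{div}_B$ integrates to zero. Hence $\int\text{div}_Q^2h=\int g_Q(\kappa_b,\text{div}_Q h)$. By the basic formula (3.1), this equals $-\int g_Q(h,\nD_{tr}\kappa_b-\kappa_b\otimes\kappa_b)=-\int g_Q(h,T_\kappa)$.

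For the second term, the same identity with $\omega=d\,\text{tr}_Q h$ gives $-\int\text{div}_Q d(\text{tr}_Qh)=-\int g_Q(\kappa_b,d\,\text{tr}_Q h)$. Moving the derivative onto $\kappa_b$ with $\nD^*=-\text{div}_B$, this becomes $\int \text{tr}_Q(h)\,\text{div}_B\kappa_b=0$ by the standing normalization $\text{div}_B\kappa_b=0$.

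Summing the three pieces yields the stated formula. Two points need care. First, I must check that the volume-variation identity $\frac{d}{dt}dV=\frac12\text{tr}_Q(h)dV$ is legitimate for bundle-like variations with $g_{\F}$ fixed; it should follow by computing in adapted coordinates. Second, I must confirm that $\kappa_b$ and the normalization $\text{div}_B\kappa_b=0$ are taken at the metric in question. This is harmless, since the formula is a first variation at a fixed time, so $\kappa_b$ and $T_\kappa$ are simply those of $g$ at that instant. I expect the main obstacle to be the first divergence term, where the non-self-adjointness of $\text{div}_Q$ is exactly what produces $T_\kappa$.
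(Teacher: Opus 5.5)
Your proposal is correct and follows the paper's proof essentially step for step: the product rule, Lemma 2.2, the defect identity $\int\text{div}_Q\omega=\int g_Q(\kappa_b,\omega)$ turning $\int\text{div}_Q^2h$ into $-\int g_Q(h,T_\kappa)$ (the paper re-derives (3.1) at this point rather than quoting it), the normalization $\text{div}_B\kappa_b=0$ killing the trace term, and the standard volume variation. Your derivative of the normalizing factor is stated more carefully than the paper's intermediate displays, which omit the factor $\text{vol}(M)^{\frac{2-q}{q}}$ that then reappears in the final formula.
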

\begin{proof}
First, \begin{equation}
\dfrac{d}{dt}\mathcal{S}^Q=\int_M \dfrac{d}{dt}S^Q dV+\int_MS^Q  \dfrac{d}{dt}dV
\end{equation} is obtained.
Then, note that the followings are attained. \begin{equation}
    \int_M \text{div}_Q^2\left(\dfrac{d}{dt}g_Q\right)=\int_M g_Q\left(\kappa_b,\text{div}_B\left(\dfrac{d}{dt}g_Q\right)\right)+\int_M\dfrac{d}{dt}g_Q(\kappa_b,\kappa_b).
\end{equation}
That is, \begin{equation}
    \int_M \text{div}_Q^2\left(\dfrac{d}{dt}g_Q\right)=-\int_M g_Q(T_{\kappa},\dfrac{d}{dt}g_Q).
\end{equation} Also,
\begin{equation}
    \int_M\text{div}_Qd\left(tr_Q\left(\dfrac{d}{dt}g_Q\right)\right)=\int_M g_Q\left(dtr_Q\left(\dfrac{d}{dt}g_Q\right),\kappa_b\right)=0
\end{equation} as $\text{div}_B\kappa_b=0.$
Moreover, we may use \begin{equation}
    \int_M\dfrac{d}{dt}dV=\dfrac{1}{2}\int_M \text{tr}\left(\dfrac{d}{dt}g\right) dV
\end{equation} as the above is calculated in \cite{Via}, Hence, we have 
\begin{equation}
    \dfrac{d}{dt}\mathcal{S}^Q=-\int_Mg_Q\left(\dfrac{d}{dt}g_Q,Ric^Q+T_{\kappa}-\dfrac{1}{2}S^Qg_Q\right)dV.
\end{equation}
On the other hand, \begin{equation}
    \dfrac{d}{dt}\text{vol}(M)^{\frac{2-q}{q}}=\dfrac{2-q}{2q\text{vol}(M)}\int_M \text{tr}\left(\dfrac{d}{dt}g\right) dV 
\end{equation} is computed. That is, the following is calculated.
\begin{equation}
    \dfrac{d}{dt}(\text{vol}(M)^{\frac{2-q}{q}})\int_MS^Q=\dfrac{2-q}{2q}\int_M \bar{S}^Q\text{tr}\left(\dfrac{d}{dt}g\right) dV 
\end{equation} Therefore, the asserted first variational formula is obtained, as desired.
\end{proof}

\begin{cor}
    Let $(M,\F)$ be a compact foliated manifold and let $g$ be a curve of bundle-like metric given by $g=g_F+g_Q(t)$. Then the transverse metric $g_Q$ of the normalized total transverse scalar curvature is critical if and only if it satisfies
    \begin{equation}
        Ric^Q+T_{\kappa}=\lambda^Q\text{ }g_Q,
    \end{equation} for some constant $\lambda^Q$.
\end{cor}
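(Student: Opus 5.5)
The plan is to read the Euler--Lagrange equation off the first variational formula of Theorem 4.7. Fix the leafwise part $g_F$ and vary only the transverse metric, $g_Q(t)$, through basic symmetric $2$-tensors $h=\frac{d}{dt}g_Q$. Then $g_Q$ is critical exactly when
\begin{equation*}
\int_M g_Q\Bigl(h,\;T_{\kappa}+Ric^Q-\dfrac{S^Q}{2}g_Q+\dfrac{q-2}{2q}\bar{S}^Qg_Q\Bigr)dV=0
\end{equation*}
for every admissible $h$. The tensor $E=T_{\kappa}+Ric^Q-\frac{S^Q}{2}g_Q+\frac{q-2}{2q}\bar{S}^Qg_Q$ is itself a basic symmetric $2$-tensor. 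Taking $h=E$ (or a cutoff of $E$ in a basic direction) therefore gives $E=0$ pointwise. Conversely, $E=0$ makes the derivative vanish. So criticality is equivalent to $E=0$, and what remains is to show that $E=0$ is the same as $Ric^Q+T_{\kappa}=\lambda^Q g_Q$ with $\lambda^Q$ constant.

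First suppose $E=0$ and take the transverse trace. Since $\text{tr}_Q T_{\kappa}=0$ under $\text{div}_B\kappa_b=0$ (Section 3), this gives
\begin{equation*}
S^Q-\dfrac{q}{2}S^Q+\dfrac{q-2}{2}\bar{S}^Q=0, \qquad\text{i.e.}\qquad \dfrac{2-q}{2}\bigl(S^Q-\bar{S}^Q\bigr)=0.
\end{equation*}
For $q\neq 2$ this says $S^Q=\bar{S}^Q$ is constant. Substituting back gives $Ric^Q+T_{\kappa}=\frac{1}{q}\bar{S}^Q g_Q$, so $\lambda^Q=\bar{S}^Q/q$.

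For the converse, suppose $Ric^Q+T_{\kappa}=\lambda^Q g_Q$ with $\lambda^Q$ constant. Tracing gives $S^Q=q\lambda^Q$, which is constant and equal to its average. Then
\begin{equation*}
E=\lambda^Q g_Q-\dfrac{q\lambda^Q}{2}g_Q+\dfrac{q-2}{2}\lambda^Q g_Q=0.
\end{equation*}

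The main obstacle is the case $q=2$, where the trace identity carries no information. There I would argue differently. Since $\frac{2-q}{q}=0$, the functional is just $\mathcal{S}^Q$, and $Ric^Q=\frac{S^Q}{2}g_Q$ holds identically in codimension $2$. Hence $E=0$ reduces to $T_{\kappa}=0$, because the two $\bar S^Q$-terms cancel. I then need to show this is consistent with the claimed form with $\lambda^Q$ constant. For that I would use Proposition 4.1: $\text{div}_B(Ric^Q+T_{\kappa})$ should equal $d\lambda^Q$ up to terms involving $\kappa_b$. Combined with the contracted Bianchi identity, $\text{div}_Q Ric^Q=\frac12 dS^Q$ transversally, and $\text{div}_B=\text{div}_Q-i_{\tau_b}$, this should force $\lambda^Q$ to be constant. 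Alternatively, via Theorem 3.3 one gets $\kappa_b=0$, and constancy then follows from the Bianchi identity exactly as in the Riemannian case.

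The other delicate point is the density step. One needs that basic symmetric tensors pair nondegenerately against $E$ under $\int_M\cdot\,dV$. This holds because $E$ is basic, so the choice $h=E$ is legitimate.
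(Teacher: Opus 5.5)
For $q\neq 2$ your argument is the same as the paper's. You extract $E=0$ from the first variational formula (Theorem 4.7), trace it using $\text{tr}_Q T_{\kappa}=0$ to get $S^Q=\bar S^Q$, and substitute to obtain $Ric^Q+T_{\kappa}=\frac1q S^Q g_Q$. Testing directly with $h=E$ is cleaner than the paper's detour through conformal variations $\frac{d}{dt}g_Q=fg_Q$. You also verify the converse, which the paper omits and which is correct for every $q$.

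\textbf{The gap: your treatment of $q=2$, which cannot be repaired.} Your reduction $E=T_{\kappa}$ is right, although what cancels is $Ric^Q$ against $\frac{S^Q}{2}g_Q$; the $\bar S^Q$-coefficient simply vanishes. The step to a constant $\lambda^Q$ then fails on both routes you propose.
\begin{itemize}
\item \emph{Bianchi route.} In codimension $2$, $Ric^Q=\frac{S^Q}{2}g_Q$ holds identically, so the contracted Bianchi identity $\text{div}_Q Ric^Q=\frac12 dS^Q$ is a tautology. Schur's lemma needs dimension at least $3$, so ``exactly as in the Riemannian case'' yields nothing.
\item \emph{Proposition 4.1 route.} From $Ric^Q+T_{\kappa}=\lambda^Q g_Q$ you get $\text{div}_B(Ric^Q+T_{\kappa})=\frac12 dS^Q=\frac q2\, d\lambda^Q$. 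On the other hand $\text{div}_B(\lambda^Q g_Q)=d\lambda^Q-\lambda^Q\kappa_b$. Together these give $\frac{q-2}{2}\,d\lambda^Q=-\lambda^Q\kappa_b$, which for $q=2$ says only $\lambda^Q\kappa_b=0$.
\end{itemize}

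In fact the ``only if'' direction is false for $q=2$. Take $M=F\times\Sigma$ with $F$ compact and $\Sigma$ a closed surface, the product foliation, and $g=g_F+g_{\Sigma}(t)$. The leaves are totally geodesic, so $\kappa_b=0=T_{\kappa}$. By Gauss--Bonnet, $\bar{\mathcal{S}}^Q=\mathcal{S}^Q=4\pi\chi(\Sigma)\,\text{vol}(F,g_F)$, independent of $g_{\Sigma}$. Hence every transverse metric is critical. This includes metrics of nonconstant Gauss curvature $K$, for which $Ric^Q+T_{\kappa}=K g_Q$ with $K$ not constant.

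So for $q=2$ criticality means exactly $T_{\kappa}=0$ (equivalently $\kappa_b=0$), and the forward implication must be stated for $q\neq 2$. The paper's proof has the same hole: it passes silently from $\bigl(1-\frac q2\bigr)(S^Q-\bar S^Q)=0$ to $S^Q=\bar S^Q$. You were right to flag this case, but the fix is to restrict the statement, not to invoke the Bianchi identity.
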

\begin{proof}
    From (4.19), we have
    \begin{equation}
        T_{\kappa}+Ric^Q-\dfrac{S^Q}{2}g_Q+\dfrac{q-2}{2q}\bar{S}^Qg_Q=0
    \end{equation} by the assumption. In particular, if we assume that 
    \begin{equation}
        \dfrac{d}{dt}g_Q=fg_Q 
    \end{equation} for some basic function $f$, then \begin{equation}
       S^Q-\dfrac{q}{2}S^Q+\dfrac{q-2}{2}\bar{S}^Q=0 
    \end{equation} is obtained, as $T_{\kappa}$ is transverse trace-free.
    Thus, the following is deduced from the above. \begin{equation}
        S^Q=\bar{S}^Q.
    \end{equation} That is, $S^Q$ is necessarily constant. Therefore,
    \begin{equation}
        T_{\kappa}+Ric^Q-\dfrac{2}{2q}S^Qg_Q=0
    \end{equation} is calculated. In other words, 
    \begin{equation}
        T_{\kappa}+Ric^Q=\dfrac{1}{q}S^Qg_Q,
    \end{equation} as desired.
\end{proof}

\begin{cor}
  Let $(M,\F)$ be a compact foliated manifold and let $g$ be a curve of bundle-like metric given by $g=g_F+g_Q(t)$. Then the transverse critical metric of $\bar{\mathcal{S}}^Q$ should be transverse Einstein and $\F$ is taut.
\end{cor}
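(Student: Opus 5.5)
By Corollary 4.8, a critical transverse metric $g_Q$ of $\bar{\mathcal{S}}^Q$ satisfies $Ric^Q+T_{\kappa}=\lambda^Q g_Q$, and the proof of that corollary shows that $\lambda^Q=\frac{1}{q}S^Q$ is constant. The plan is to take the basic divergence of this identity, use Proposition 4.1 to turn $\text{div}_B T_\kappa$ into a Ricci term, and then let the integral identity behind Theorem 4.2 and Corollary 3.4 force $\kappa_b=0$. Once $\kappa_b=0$, Theorem 3.3 gives $T_\kappa=0$, and the critical-point equation reduces to $Ric^Q=\lambda^Q g_Q$. So $g_Q$ is transverse Einstein and $\F$ is taut.

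\textbf{Step 1 (divergence of the critical equation).} Apply $\text{div}_B$ to $Ric^Q+T_{\kappa}=\lambda^Q g_Q$. Since $\lambda^Q$ is constant, $\text{div}_B(\lambda^Q g_Q)=-\lambda^Q\kappa_b$, because $\text{div}_Q g_Q=0$ and $\text{div}_B=\text{div}_Q-i_{\tau_b}$. For the Ricci term, use the transverse contracted Bianchi identity $\text{div}_Q Ric^Q=\frac12 dS^Q$. With $S^Q$ constant this gives $\text{div}_B Ric^Q=-Ric^Q(\tau_b,\cdot)$. Proposition 4.1 gives $\text{div}_B T_\kappa=Ric^Q(\tau_b,\cdot)$. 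Adding, the two Ricci terms cancel and we are left with $-\lambda^Q\kappa_b=0$. Hence either $\kappa_b=0$, or $\lambda^Q=0$.

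\textbf{Step 2 (the case $\lambda^Q=0$).} Here pair the critical equation with $\kappa_b\otimes\kappa_b$. Since $g_Q(T_\kappa,\kappa_b\otimes\kappa_b)=g_Q(\nabla_{\tau_b}\kappa_b,\kappa_b)-|\kappa_b|^4$, integration together with (3.4) gives
\begin{equation*}
\int_M Ric^Q(\tau_b,\tau_b)=\int_M|\kappa_b|^4 .
\end{equation*}
On the other hand, (4.7) gives $\int_M Ric^Q(\tau_b,\tau_b)=-\int_M|\nabla_{tr}\kappa_b|^2$. Therefore
\begin{equation*}
\int_M|\kappa_b|^4+\int_M|\nabla_{tr}\kappa_b|^2=0,
\end{equation*}
so $\kappa_b=0$. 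In fact this pairing argument works for any $\lambda^Q$: the extra term $\lambda^Q\int_M|\kappa_b|^2$ has the right sign when $\lambda^Q\le 0$, and the case $\lambda^Q>0$ is already settled by Step 1.

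\textbf{Main obstacle.} The delicate point is the sign conventions in Step 1. These are the sign of $\text{div}_B g_Q$, the sign of the contracted Bianchi identity under the paper's convention for $R^Q$, and the consistency with Proposition 4.1. If the Ricci terms do not cancel exactly, I would fall back on the integral argument of Step 2 for general $\lambda^Q$. Concretely, I would handle $\lambda^Q>0$ by using Theorem 4.2 once more: $Ric^Q(\tau_b,\tau_b)$ would then be controlled from below.
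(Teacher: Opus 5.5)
Your argument is correct, but it follows a different route from the paper's.

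\textbf{The paper's route.} The paper never differentiates the critical equation. It proves Lemma 4.10, $\int_M g_Q(Ric^Q,T_{\kappa})=0$, using only the contracted Bianchi identity and $\text{div}_B\kappa_b=0$. Since $T_{\kappa}$ is trace-free, this gives $\int_M|T_{\kappa}|^2=\int_M g_Q(T_{\kappa},Ric^Q+T_{\kappa})-\int_M g_Q(T_{\kappa},Ric^Q)=\int_M g_Q(T_{\kappa},\lambda^Q g_Q)=0$. The paper phrases this as comparing $\int_M|Ric^Q+T_{\kappa}|^2$ with $\int_M|Ric^Q|^2$. Hence $T_{\kappa}=0$, and $\kappa_b=0$ by Theorem 3.3.

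\textbf{Your route.} You apply $\text{div}_B$ to the critical equation. The cancellation of $\text{div}_B Ric^Q=\frac12 dS^Q-Ric^Q(\tau_b,\cdot)$ against Proposition 4.1 is exact, and it yields the pointwise identity $\lambda^Q\kappa_b=0$. You then need a separate integral argument for the case $\lambda^Q=0$, which you supply correctly.

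\textbf{Comparison.} The paper's route is shorter and has no case split. It avoids Proposition 4.1, and with it the Weitzenb\"ock formula. It also never uses constancy of $\lambda^Q$. Yours does use it, and that constancy comes from the trace step in Corollary 4.8, which divides by $1-\frac{q}{2}$ and so silently needs $q\neq 2$. For $q=2$ the critical equation is simply $T_{\kappa}=0$, so nothing is lost there. In exchange, your route gives the pointwise information $\lambda^Q\kappa_b=0$, and it mirrors the classical Bianchi-identity argument showing that an Einstein function is constant.

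\textbf{A sign slip in your side remark.} Pairing the critical equation with $\kappa_b\otimes\kappa_b$ for general $\lambda^Q$ gives $-\int_M|\nabla_{tr}\kappa_b|^2-\int_M|\kappa_b|^4=\lambda^Q\int_M|\kappa_b|^2$. This forces $\kappa_b=0$ when $\lambda^Q\ge 0$, not when $\lambda^Q\le 0$. So your fallback in the ``main obstacle'' paragraph is backwards: without Step 1, the unresolved case would be $\lambda^Q<0$. That is exactly the nontaut transverse Einstein regime, and appealing to Theorem 4.2 would not obviously close it. Since Step 1 is correct, this slip does not affect your proof.
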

Before the proof of the above corollary, justifying the following lemma is necessary.
\begin{lem}
    Let $(M,\F,g)$ be a compact foliated manifold. Then the following formula holds.
    \begin{equation}
        \int_Mg_Q(Ric^Q,T_{\kappa})=0.
    \end{equation}
\end{lem}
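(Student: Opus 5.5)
The plan is to split $T_{\kappa}=\nD_{tr}\kappa_b-\kappa_b\otimes\kappa_b$ and compute the two resulting integrals separately. The second one is immediate:
\begin{equation*}
\int_M g_Q(Ric^Q,\kappa_b\otimes\kappa_b)=\int_M Ric^Q(\tau_b,\tau_b).
\end{equation*}
So the whole statement reduces to showing
\begin{equation*}
\int_M g_Q(Ric^Q,\nD_{tr}\kappa_b)=\int_M Ric^Q(\tau_b,\tau_b).
\end{equation*}

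For the first integral, integrate by parts using that $-\text{div}_B$ is the formal adjoint of $\nD_{tr}$:
\begin{equation*}
\int_M g_Q(Ric^Q,\nD_{tr}\kappa_b)=-\int_M g_Q(\text{div}_B Ric^Q,\kappa_b).
\end{equation*}
Next rewrite $\text{div}_B Ric^Q$ using (2.4), that is $\text{div}_B=\text{div}_Q-i_{\tau_b}$:
\begin{equation*}
\text{div}_B Ric^Q=\text{div}_Q Ric^Q-Ric^Q(\tau_b,\cdot).
\end{equation*}
The key input is the transverse contracted second Bianchi identity $\text{div}_Q Ric^Q=\frac12 dS^Q$. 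Substituting it gives
\begin{equation*}
\int_M g_Q(Ric^Q,\nD_{tr}\kappa_b)=-\frac12\int_M g_Q(dS^Q,\kappa_b)+\int_M Ric^Q(\tau_b,\tau_b).
\end{equation*}
The first term on the right vanishes: integrating by parts once more with $\text{div}_B$ gives $\frac12\int_M S^Q\,\text{div}_B\kappa_b$, which is zero by the standing assumption $\text{div}_B\kappa_b=0$. This leaves exactly $\int_M Ric^Q(\tau_b,\tau_b)$, and subtracting the $\kappa_b\otimes\kappa_b$ term yields $\int_M g_Q(Ric^Q,T_{\kappa})=0$.

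The main obstacle is justifying $\text{div}_Q Ric^Q=\frac12 dS^Q$ pointwise for basic tensors. I would argue locally: near any point the foliation is given by a Riemannian submersion onto a local quotient $(\bar U,\bar g)$ with $g_Q$ the pullback of $\bar g$. On basic tensors, $\nD$ in transverse directions is the pullback of the Levi-Civita connection of $\bar g$, and $\text{div}_Q=\sum_i i_{\mathbf{e}_i}\nD_{\mathbf{e}_i}$ uses only a transverse frame. Likewise $R^Q$, $Ric^Q$ and $S^Q$ are the pullbacks of the curvature, Ricci and scalar curvature of $\bar g$. The contracted Bianchi identity on $\bar U$ therefore pulls back verbatim. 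It is important that the identity is taken with $\text{div}_Q$ and not $\text{div}_B$; the discrepancy between the two, $i_{\tau_b}$, is precisely what produces the $Ric^Q(\tau_b,\tau_b)$ term that cancels.
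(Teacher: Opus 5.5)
Your proposal is correct and is essentially the paper's proof. The paper applies identity (3.1) with $v=Ric^Q$, which is exactly your integration by parts combined with $\text{div}_B=\text{div}_Q-i_{\tau_b}$; the two $\int_M Ric^Q(\tau_b,\tau_b)$ terms you track cancel inside that identity. It then uses $\text{div}_Q Ric^Q=\frac12 dS^Q$ and $\text{div}_B\kappa_b=0$ just as you do, and your local-quotient justification of that Bianchi identity is a detail the paper only cites from Tondeur.
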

\begin{proof}
    Due to the second Bianchi identity with respect to $R^Q,$ it is well-known that 
    \begin{equation}
    \text{div}_QRic^Q=\dfrac{1}{2}dS^Q
    \end{equation} holds \cite{Ton}.
    Thus, we have
      \begin{equation}
        \int_Mg_Q(Ric^Q,T_{\kappa})=-\dfrac{1}{2}\int_M g_Q(dS^Q,\kappa_b).
    \end{equation}
    Since \begin{equation}
        -\dfrac{1}{2}\int_M g_Q(dS^Q,\kappa_b)=\dfrac{1}{2}\int_MS^Q\text{div}_B\kappa_b, 
    \end{equation} we obtain \begin{equation}
        \int_Mg_Q(Ric^Q,T_{\kappa})=0
    \end{equation} as a consequence.
\end{proof}

Now we are ready to prove Corollary 4.9.
\begin{proof}
    Let us consider the following integral inner product.
    \begin{equation}
        \int_Mg_Q(Ric^Q+T_{\kappa},Ric^Q+T_{\kappa})=\int_Mg_Q (\lambda^Q\text{ }g_Q,\lambda^Q\text{ }g_Q).
    \end{equation} As $T_{\kappa}$ is transverse trace-free, the following is clearly computed.
    \begin{equation}
        \int_M g_Q(\lambda^Q\text{ }g_Q,\lambda^Q\text{ }g_Q)=\int_Mg_Q(Ric^Q,\lambda^Q \text{ }g_Q)=\int_M g_Q(Ric^Q,Ric^Q+T_{\kappa}).
    \end{equation} By the previous lemma, the left-hand side of (4.40) is derived as follows.
    \begin{equation}
        \int_Mg_Q(Ric^Q+T_{\kappa},Ric^Q+T_{\kappa})= \int_M g_Q(Ric^Q,Ric^Q)+\int_M g_Q(T_{\kappa,}T_{\kappa}).
    \end{equation} Simultaneously, the following is the calculation of the right-hand side of (4.41).  
    \begin{equation}
        \int_M g_Q(Ric^Q,Ric^Q+T_{\kappa})=\int_M g_Q(Ric^Q,Ric^Q).
    \end{equation} As a result, we have
    \begin{equation}
        \int_M g_Q(Ric^Q,Ric^Q)= \int_M g_Q(Ric^Q,Ric^Q)+\int_M g_Q(T_{\kappa,}T_{\kappa}),
    \end{equation} which implies that $T_{\kappa}=0$.
\end{proof}

\section{Examples}
In this section, we are going to compute the symmetric tautness tensor on a compact manifold with a nontaut Riemannian foliation, which are also transverse Einstein.

First, the symmetric tautness tensor on a $3$-dimensional Carri\`ere torus is derived as follows. 
\begin{eg}\normalfont($3$-dimensional Carri\`ere torus)\cite{Car}
Consider the following hyperbolic torus $T^3_{A}$ defined as follows. Let $A=(a_{ij}) \in SL_2(\mathbb{Z})$ satisfying $\text{tr}(A)>2$ and $\mathbf{e}_1, \mathbf{e}_2$ be its eigenvectors corresponding to its eigenvalues $\rho$ and $\dfrac{1}{\rho}$, respectively. Let us denote $\mathbf{e}_3 \in \mathbb{R}^3$ by the vector field of the image of the cross product with respect to $\mathbf{e}_1$ and $\mathbf{e}_2$, if we identify the eigenvectors of $A$ with two left-invariant vector fields on $\mathbb{R}^3$. That is, $\mathbf{e}_3 = \mathbf{e}_1 \times \mathbf{e}_2$. Let us also assume the following Lie algebra structure on such $\mathbb{R}^3$.
\begin{equation}
\begin{split}
&[\mathbf{e}_1,\mathbf{e}_2]=0\\
&[\mathbf{e}_1,\mathbf{e}_3]=\text{log}\rho\text{ }\mathbf{e}_1\\
&[\mathbf{e}_2,\mathbf{e}_3]=-\text{log}\rho\text{ }\mathbf{e}_2
\end{split}
\end{equation}

Then we have the following induced Riemannian metric on such Lie group.
\begin{equation}
g=dx_1^2+dx_2^2+dx_3^2,
\end{equation} where $\{dx_i\}$ is the dual basis of the orthonormal basis $\{\mathbf{e}_i\}$.

Also, let us think of the following cocompact relation on $(\mathbb{R}^3,[,])$.
\begin{equation}
\{(x_1,x_2,x_3)\in \mathbb{R}^3|(x_1+\rho^{n_3}+n_1)\mathbf{e}_1+(x_2+\rho^{-n_3}+n_2)\mathbf{e}_2+(x_3+n_3)\mathbf{e}_3, n_1,n_2,n_3 \in \mathbb{Z} \}.
\end{equation}

Then the obtained compact hyperbolic torus has the induced Riemannian foliation, whose transverse space is spanned by $\{\mathbf{e}_2,\mathbf{e}_3\}$ with the transverse metric \begin{equation}
g_Q=dx_2^2+dx_3^2.
\end{equation} This foliation is known as \textit{Carri\`ere's torus}, which is the most famous example of a nontaut Riemannian foliation on a compact manifold(we refer to the proof, for example, \cite{Car, HR}). According to \cite{HJM}, we may calculate that its transverse metric is transverse Einstein, which satisfies $\displaystyle{Ric^Q=-(\text{log}\rho)^2g_Q}$.

Therefore, we may justify that this torus has a nontaut Riemannian foliation again, as all of these terms are calculated to be nonzero.

\begin{itemize}
\item $\kappa_b=-\text{log}\rho\text{ }dx_3$ by direct calculation.
\item Hence, we obtain \begin{equation}T_{\kappa}=(\text{log}\rho)^2(dx_2\otimes dx_2-dx_3\otimes dx_3)\end{equation} by direct calculation of transverse Riemannian connection. Also, $\kappa_b$ is basic coclosed due to the computation on ordinary Riemannian connection.
\item Due to Proposition 4.1, \begin{equation}\text{div}_B( T_{\kappa})=-(\text{log}\rho)^3dx_3\end{equation} as $Ric^Q=-(\text{log}\rho)^2g_Q$.
\item Moreover, the following is calculated. \begin{equation}
    J^Q(\kappa_b)=2(\text{log}\rho)^2\kappa_b-\dfrac{1}{2}d|\text{log}\rho|^2=2(\text{log}\rho)^2\kappa_b
\end{equation} Hence, we obtain that $\kappa_b$ is an eigenvector of $J^Q$ corresponding to $2(\text{log}\rho)^2$.

\end{itemize} Hence, the symmetric tautness tensor and its basic divergence image on $T^3_A$ do not vanish. 
\end{eg}

Recently, an another example of a $4$-codimensional nontaut Riemannian foliation on a compact $7$-dimensional manifold is constructed by Habib, Richardson and Wolak \cite{HRW}, which is not a transverse Einstein foliation. Modifying their example, we compute the symmetric tautness tensor of a $3$-codimensional nontaut transverse Einstein foliation on the $7$-dimensional compact manifold of Habib et al. as follows. 

\begin{eg}\normalfont($3$-codimensional example)
Let us consider a $7$-dimensional solvable Lie group $G$ of the following $9\times 9$ matrices $A=(a_{ij}(x_1,x_2,x_3,x_4,x_5,x_6,x_7))$ of real entries.
\begin{equation}
\begin{split}
&a_{11}=a_{44}=e^{kx_7}\\
&a_{22}=a_{55}=1\\
&a_{33}=a_{66}=e^{-kx_7}\\
&a_{77}=a_{88}=1\\
&a_{21}=-n_1x_2e^{kx_7}\\
&a_{45}=-n_2x_5e^{kx_7}\\
&(a_{i9})_{i=1}^9=(x_1,x_3,x_2,x_4,x_6,x_5,x_7,0,1)^T\\
&\text{ else, }a_{ij}=0 ,
\end{split}
\end{equation} where $n_1, n_2 \neq 0$ are real constants and $k$ is an another real constant satisfying $2\text{cosh}(k) \geq 3$ is an integer.
Then the left-invariant vector fields on $G$ are calculated as follows.
\begin{equation}
\begin{split}
&\mathbf{e}_1=e^{kx_7}\left(\dfrac{\partial}{\partial x_1}-n_1x_2\dfrac{\partial}{\partial x_3}\right)\\
&\mathbf{e}_2=e^{-kx_7}\dfrac{\partial}{\partial x_2}\\
&\mathbf{e}_4=e^{kx_7}\left(\dfrac{\partial}{\partial x_4}-n_2x_5\dfrac{\partial}{\partial x_6}\right)\\
&\mathbf{e}_5=e^{-kx_7}\dfrac{\partial}{\partial x_5}\\
&\mathbf{e}_3=\dfrac{\partial}{\partial x_3},\mathbf{e}_6=\dfrac{\partial}{\partial x_6},\mathbf{e}_7=\dfrac{\partial}{\partial x_7}
\end{split}
\end{equation}
Now we consider the left-invariant Riemannian metric which makes $\{\mathbf{e}_i\}$ an orthonormal basis of $G$ as follows.
\begin{equation}
g=\sum_{i=1}^7\mathbf{e}_i^{\flat} \otimes \mathbf{e}_i^{\flat},
\end{equation}where $\{\mathbf{e}_i^{\flat}\}$ is the dual basis of $\{\mathbf{e}_i\}$.

Additionally, we assume the following Lie bracket structure $[,]$ on $(G,g)$.
\begin{equation}
\begin{split}
&g([\mathbf{e}_1,\mathbf{e}_7],\mathbf{e}_1)=g([\mathbf{e}_4,\mathbf{e}_7],\mathbf{e}_4)=-k\\
&g([\mathbf{e}_2,\mathbf{e}_7],\mathbf{e}_2)=g([\mathbf{e}_5,\mathbf{e}_7],\mathbf{e}_5)=k\\
&g([\mathbf{e}_1,\mathbf{e}_2],\mathbf{e}_3)=n_1\\
&g([\mathbf{e}_4,\mathbf{e}_5],\mathbf{e}_6)=n_2\\
&\text{ else } g([\mathbf{e}_i,\mathbf{e}_j],\mathbf{e}_k)=0,
\end{split}
\end{equation}

Also, let us assume a discrete cocompact action on $(G,[,],g)$. For the concrete and detailed action, we refer to \cite{HRW} again.

If we define the distribution $T\F$ spanned by $\{\mathbf{e}_1,\mathbf{e}_3,\mathbf{e}_4,\mathbf{e}_6\},$ then it forms a $4$-dimensional Riemannian foliation since $T\F$ is integrable and
$[\mathbf{e}_j,\mathbf{e}_i]\in \Gamma(T\F)$ for all $i=1,...,7$ and $j=1,3,4,6$.
Moreover, one may directly calculate $R^Q$, $\kappa_b$, $T_{\kappa}$, $\text{div}_BT_{\kappa}$ as follows.

\begin{itemize}
\item $R^Q\equiv R^Q(\mathbf{e}_i,\mathbf{e}_j,\mathbf{e}_i,\mathbf{e}_j)=-k^2$, where $i\neq j$ and $i,j=2,5,7$. Hence \begin{equation}
Ric^Q=-2k^2\sum_{i=2,5,7}\mathbf{e}_i^{\flat}\otimes\mathbf{e}_i^{\flat}.
\end{equation}
\item $\kappa_b=2k\mathbf{e}_7^{\flat}$ by direct calculation applying (5.11). Also, $\text{div}_B\kappa_b=0$ is straightforwardly calculated.
\item Due to the calculation of transverse Riemannian connection, we have \begin{equation}
\nD_{tr}\kappa_b=2k^2(\mathbf{e}_2^{\flat}\otimes\mathbf{e}_2^{\flat}+\mathbf{e}_5^{\flat}\otimes\mathbf{e}_5^{\flat}).
\end{equation}Hence,\begin{equation} T_{\kappa}=2k^2(\mathbf{e}_2^{\flat}\otimes\mathbf{e}_2^{\flat}+\mathbf{e}_5^{\flat}\otimes\mathbf{e}_5^{\flat}-2\mathbf{e}_7^{\flat}\otimes\mathbf{e}_7^{\flat})\end{equation} is easily obtained.
\item Also, we obtain \begin{equation}\text{div}_B( T_{\kappa})=4k^3\mathbf{e}_7^{\flat},\end{equation}  according to Proposition 4.1.
\item Moreover, we have \begin{equation}
    J^Q(\kappa_b)=4k^2\kappa_b-\dfrac{1}{2}d|2k|^2=4k^2\kappa_b.
\end{equation} Hence, it is an eigenvector of $J^Q$ corresponding to the eigenvalue $4k^2$.
\end{itemize}

Therefore, an example of a $3$-codimensional nontaut compact transverse Einstein foliation is obtained.
\end{eg}

\addresses


\begin{thebibliography}{00}
\bibitem{AL} J. A. Alvarez L\'opez, \textit{The basic component of the mean curvature of Riemannian foliations,} Ann. Glob. Anal. Geom. 10, 179–194 (1992).
\bibitem{Car} Y. Carri\`ere, \textit{Flots riemanniens,} Transversal structure of foliations(Toulouse, 1982), 31–52, (1984).
\bibitem{HJM} S. Hwang, S. D. Jung and J. Moon, \textit{Transverse Ricci solitons on a compact foliated manifold,} Rev. R. Acad. Cienc. Exactas Fís. Nat. Ser. A Mat. RACSAM 120(1), 23, (2026).
\bibitem{HR} G. Habib and K. Richardson, \textit{Modified differentials and basic cohomology for Riemannian foliations,} J. Geom. Anal. 23, No. 3, 1314-1342, (2013).
\bibitem{HRW} G. Habib, K. Richardson and R. Wolak, \textit{Transverse geometric formality,} Math. Z., 309(2), 20. (2025).  
\bibitem{Jun} S.D. Jung, \textit{Eigenvalue estimates for the basic Dirac operator on a Riemannian foliation admitting a basic harmonic 1-form,} J. Geom. Phys. 57(4), 1239–1246,
(2007). 
\bibitem{KT} F. Kamber and Ph. Tondeur, \textit{Infinitesimal automorphisms and second variation of the energy for harmonic foliations,} Tohoku Math. J. (2), 34, no. 4, 525–538, (1982).
\bibitem{Lin} D. Lin \textit{Transverse $\F^T$-entropy for Riemannian foliations,} J. Math. Anal. Appl. 556  no. 1, part 1, Paper No. 130070 (2026).
\bibitem{Ton} Ph. Tondeur, \textit{Geometry of foliations,} Birkh\"auser Basel (1997).
\bibitem{Via} J. Viaclovsky \textit{Critical metrics for Riemannian curvature functional,} Arxiv:1405.6080, (2014).

\end{thebibliography}
\end{document}